\title{On base change of the fundamental group scheme}
\author{Axel St\"abler}
\DeclareMathOperator{\Proj}{Proj}
\DeclareMathOperator{\Syz}{Syz}
\DeclareMathOperator{\rk}{rk}
\DeclareMathOperator{\id}{id}
\DeclareMathOperator{\chara}{char}
\DeclareMathOperator{\Spec}{Spec}
\DeclareMathOperator{\Pic}{Pic}
\begin{document}
\swapnumbers
\theoremstyle{definition}
\newtheorem{Le}{Lemma}[section]
\newtheorem{Def}[Le]{Definition}
\newtheorem*{DefB}{Definition}
\newtheorem{Bem}[Le]{Remark}
\newtheorem{Ko}[Le]{Corollary}
\newtheorem{Theo}[Le]{Theorem}
\newtheorem*{TheoB}{Theorem}
\newtheorem{Bsp}[Le]{Example}
\newtheorem{Be}[Le]{Observation}
\newtheorem{Prop}[Le]{Proposition}
\newtheorem*{PropB}{Proposition}
\newtheorem{Sit}[Le]{Situation}
\newtheorem{Que}[Le]{Question}
\newtheorem*{Con}{Conjecture}
\newtheorem{Dis}[Le]{Discussion}
\newtheorem{Prob}[Le]{Problem}
\newtheorem{Konv}[Le]{Convention}
\def\cocoa{{\hbox{\rm C\kern-.13em o\kern-.07em C\kern-.13em o\kern-.15em
A}}}
\address{Axel St\"abler\\
Johannes Gutenberg-Universit\"at Mainz\\ Fachbereich 08\\
Staudingerweg 9\\
55099 Mainz\\
Germany}
\email{staebler@uni-mainz.de}
\date{\today}
\subjclass[2010]{Primary 14H30, 14H60}
\begin{abstract}
We provide for all prime numbers $p$ examples of smooth projective curves over a field of characteristic $p$ for which base change of the fundamental group scheme fails. This is intimately related to how $F$-trivial vector bundles, i.\,e.\ bundles trivialized by a power of the Frobenius morphism, behave in (trivial) families. We conclude with a study of the behavior of $F$-triviality in (not necessarily trivial) families.
\end{abstract}
\maketitle
\section*{Introduction}
In \cite{norifundamentalgroup} Nori introduced the so-called fundamental group scheme of a proper connected reduced scheme $X$ over a field $k$ as the affine group scheme associated to the (neutral) Tannakian category of essentially finite vector bundles on $X$ with a fixed fiber functor $x$. In \cite[Conjecture on p.\ 89]{norifundamentalgroup} Nori conjectured that if $X$ is a complete geometrically connected and reduced scheme over an algebraically closed field $k$ and $k \subseteq K$ is an arbitrary extension of algebraically closed fields then the natural map 
\begin{equation}
\label{hmorph}
h_{X}: \pi_1(X \times_k \Spec K, x) \to \pi_1(X, x) \times_k \Spec K 
\end{equation}
is an isomorphism of affine group schemes over $K$.

If $\chara k =0$ then the fundamental group scheme coincides with the \'etale fundamental group scheme so that this is well-known in this case (see \cite[Exp.\ X, Corollaire 1.8]{SGA1} or \cite[Proposition 5.6.7]{szamuelyfungroups} the latter being in English).

Recall that a vector bundle $\mathcal{S}$ on a projective variety $X$ over a field of positive characteristic is called \emph{$F$-trivial} if $F^{e^\ast} \mathcal{S} \cong \mathcal{O}_X^{\rk \mathcal{S}}$ for some $e \geq 1$, where $F: X \to X$ denotes the absolute Frobenius morphism. Note that an $F$-trivial vector bundle on a smooth projective variety is always strongly semistable with respect to any polarization (cf. \cite[Lemma 1.1]{mehtasubramanianlocalfungroupscheme}).

In order to attack Nori's conjecture Mehta and Subramanian proved (\cite{methasubramanianfungroupscheme}) the following

\begin{PropB}
Let $X$ be a reduced connected complete curve of genus $\geq 2$ over an algebraically closed field $k$ of positive characteristic, let $k \subseteq K$ be an extension of algebraically closed fields and let $x$ be a $k$-rational point. Suppose that the morphism $h_X$ in \eqref{hmorph} is an isomorphism of affine group schemes over $K$. Then for every stable, $F$-trivial bundle $\mathcal{E}$ on $X_K$, there is a bundle $\mathcal{F}$ over $X$ such that $\mathcal{F} \otimes_k K \cong \mathcal{E}$.
\end{PropB}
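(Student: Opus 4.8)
The plan is to translate the statement completely into Tannakian terms and then reduce it to an elementary descent fact for simple modules over a finite-dimensional algebra. By Nori's theory the fiber functor $x$ identifies the category of essentially finite bundles on $X$ with $\mathrm{Rep}_k(\pi_1(X,x))$ and, using the induced $K$-point, the essentially finite bundles on $X_K$ with $\mathrm{Rep}_K(\pi_1(X_K,x))$; moreover pullback of bundles along $X_K \to X$ matches the base-change functor $V \mapsto V \otimes_k K$ on representations. Since an $F$-trivial bundle is essentially finite and (by the cited result) strongly semistable of degree $0$, the \emph{stable} bundle $\mathcal{E}$ is a \emph{simple} object of the Tannakian category over $K$ and hence corresponds to an \emph{irreducible} representation $V$ of $\pi_1(X_K,x)$. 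It therefore suffices to produce a representation $S$ of $\pi_1(X,x)$ over $k$ with $S \otimes_k K \cong V$; the bundle $\mathcal{F}$ attached to $S$ will then satisfy $\mathcal{F} \otimes_k K \cong \mathcal{E}$ by faithfulness of the equivalence.

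First I would use the hypothesis that $h_X$ in \eqref{hmorph} is an isomorphism to control the finite quotients through which $V$ factors. Writing $\pi_1(X,x) = \varprojlim_i G_i$ as the cofiltered limit of its finite quotients over $k$, the isomorphism gives $\pi_1(X_K,x) \cong \varprojlim_i (G_i \times_k K)$. Because $V$ is finite dimensional it factors through a finite quotient of $\pi_1(X_K,x)$, and since $\mathcal{O}(\varprojlim_i (G_i \times_k K)) = \varinjlim_i \bigl(\mathcal{O}(G_i) \otimes_k K\bigr)$ is a filtered union, the corresponding finite-dimensional Hopf subalgebra lies in some $\mathcal{O}(G_i) \otimes_k K$. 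Hence $V$ is already a representation of the base-changed finite group scheme $G_i \times_k K$ for some $i$. This is where the isomorphism hypothesis is essential: without it the finite group scheme trivialising $\mathcal{E}$ need not be defined over $k$, which is precisely the failure the rest of the paper exploits.

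It then remains to descend an irreducible representation of $G_i \times_k K$ to $k$. Set $A := \mathcal{O}(G_i)^{\ast}$, the finite-dimensional measure algebra over $k$, so that representations of $G_i \times_k K$ are exactly $A \otimes_k K$-modules and $V$ is a \emph{simple} such module. Here I would invoke that $k$ is algebraically closed: then $A/\Rad(A)$ is a product of matrix algebras over $k$, hence a separable $k$-algebra, so $\Rad(A) \otimes_k K = \Rad(A \otimes_k K)$ remains the radical and $(A \otimes_k K)/\Rad(A \otimes_k K) \cong \prod_j \mathrm{Mat}_{n_j}(K)$. Consequently every simple $A \otimes_k K$-module has the form $S \otimes_k K$ for a simple $A$-module $S$. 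Taking $S$ as a representation of $G_i$, hence of $\pi_1(X,x)$, and letting $\mathcal{F}$ be the associated essentially finite (in fact $F$-trivial) bundle on $X$, the chain of equivalences yields $\mathcal{F} \otimes_k K \cong \mathcal{E}$.

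I expect the main obstacle to lie in the bookkeeping of the first two steps rather than in the final algebra lemma: one must check carefully that the Tannakian equivalence is compatible with base change, so that descending $V$ genuinely descends the underlying bundle, and that a finite-dimensional representation of the pro-finite group scheme $\pi_1(X_K,x)$ really factors, via $h_X$, through a group scheme of the shape $G_i \times_k K$. Once these compatibilities are secured, the descent of simple modules over the algebraically closed field $k$ is routine, and the stability hypothesis enters only through the reduction to the irreducible case.
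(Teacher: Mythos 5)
The paper offers no argument of its own for this statement: its ``proof'' is simply the citation \cite[Proposition 3.1]{methasubramanianfungroupscheme}, and your proposal is a correct reconstruction of essentially the argument given there --- translating the stable $F$-trivial bundle into an irreducible representation of $\pi_1(X_K,x)$ (stability gives simplicity because subobjects in Nori's Tannakian category are degree-zero subbundles), using the isomorphism $h_X$ together with the filtered-union structure of $\mathcal{O}(\pi_1)$ to factor the representation through a base-changed finite quotient $G_i \times_k K$, and descending the simple module via $\Rad(A \otimes_k K) = \Rad(A) \otimes_k K$, which holds since $k$ is algebraically closed. The compatibilities you flag at the end (the Tannakian equivalence commuting with base change, and the factorization through $G_i \times_k K$) hold by the very construction of $h_X$ and by the fundamental theorem of comodules, so there is no gap.
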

\begin{proof}
This is \cite[Proposition 3.1]{methasubramanianfungroupscheme}. 
\end{proof}

There they also showed that Nori's conjecture is false for curves with a cuspidal singularity (see \cite[paragraph after Proposition 3.1]{methasubramanianfungroupscheme}). This proposition will be crucial for our purposes.
Mehta and Subramanian also established several other equivalent notions in a follow-up paper (see \cite{mehtasubramanianlocalfungroupscheme} and, in particular, [Theorem on p.\, 208, ibid.]).

In \cite{paulycounterexamplenori}, Pauly showed that Nori's conjecture is false for certain smooth projective ordinary curves of genus $2$ in characteristic $2$. Pauly's method is to use the Proposition above and to explicitly determine all stable rank $2$ bundles trivialized by the fourth Frobenius pull back. More precisely, Pauly exploits that one has a very explicit description of the moduli space of semistable rank $2$ bundles over these curves and analyzes the (iterated) preimages of the Verschiebung.

In this paper, we will provide for all characteristics $p$ examples of smooth projective curves where base change fails for the fundamental group scheme.
Our method is rather different from that of Pauly. We will use so-called syzygy bundles to explicitly construct bundles with the looked-for behavior. A \emph{syzygy bundle} is a locally free sheaf $\mathcal{S}$ which fits into a short exact sequence \[
\begin{xy}
\xymatrix{0 \ar[r]& \mathcal{S} \ar[r]& \bigoplus_{i=0}^{n} \mathcal{O}_Y(-d_i) \ar[r]& \mathcal{O}_Y \ar[r] & 0,}
\end{xy} 
\] where $\mathcal{O}_Y(1)$ is an ample line bundle on a projective variety $Y$. We call a global section of $\mathcal{S}(m)$ a \emph{syzygy of total degree $m$}. The advantage of syzygy bundles is that they are well-suited for explicit computations. On the other hand, on a smooth projective curve over an algebraically closed field any vector bundle is, up to twist, a syzygy bundle for a suitable polarization (cf.\ e.\,g.\ \cite[Proposition 3.8]{brennerstaeblerdaggersolid}).

The typical example the reader should keep in mind is that $R$ is a standard-graded normal two-dimensional domain and $I$ a homogeneous $R_+$-primary ideal. Then the sheaf associated to the first syzygies of $I$ on $\Proj R$ is a syzygy bundle. In particular, if $I = (f_0, \ldots, f_n)$, where the $f_i$ are homogeneous of degrees $d_i$ then we also write $\mathcal{S} = \Syz(f_0, \ldots, f_n)$ for the bundle on $\Proj R$ obtained by sheafifying the exact sequence of (graded) $R$-modules \[\begin{xy} \xymatrix@1{0 \ar[r]& \Syz(f_0, \ldots, f_n) \ar[r]& \bigoplus_{i=0}^n R(-d_i)  \ar[r]^<<<<<{\varphi} & R \ar[r] &R/(f_0, \ldots, f_n) \ar[r] & 0} \end{xy},\] where $\varphi = (f_0, \ldots, f_n)$. Note that the penultimate term vanishes on $\Proj R$ since $I$ is $R_+$-primary.

Throughout this paper we will often consider morphisms \[X = Y \times_k \Spec k[t] \longrightarrow \Spec k[t]\] and a vector bundle $\mathcal{S}$ over $X$. We then use notation $\mathcal{S}_t, X_t$ to denote the generic fibers and $\mathcal{S}_{\sigma}, X_\sigma$ for $\sigma \in k$ will denote the special fiber over the point $\sigma$.

Combining results of Biswas-dos Santos (\cite[Proposition 11 and Theorem 13]{biswasdossantosfinitemorphismfungroupscheme2}) and Mehta-Subramanian (\cite[Theorem on p.\ 208]{mehtasubramanianlocalfungroupscheme}) these examples also yield new examples where the universal torsor for the fundamental group scheme (cf.\ \cite[Chapter II, Definition 1 and 3]{norifundamentalgroup}) is not reduced.

\section{Stable rank two bundles}

In this section we collect several results on $F$-trivial and on semistable vector bundles that we shall need in the sequel.

\begin{Le}
Let $X$ be a smooth projective curve over a field $k$ of positive characteristic $p$. Let $\mathcal{S}$ be a locally free sheaf of rank $2$ with trivial determinant. Assume that $F^{e^\ast} \mathcal{S}$ is trivial for some $e \geq 1$. Then $\mathcal{S}$ is stable if and only if $\mathcal{S}$ is not an extension of a $p^e$-torsion line bundle with its dual.
\end{Le}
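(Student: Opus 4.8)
The plan is to reduce the statement to a computation with line subbundles. Since $\det \mathcal{S} \cong \mathcal{O}_X$ we have $\deg \mathcal{S} = 0$, hence $\mu(\mathcal{S}) = 0$. By \cite[Lemma 1.1]{mehtasubramanianlocalfungroupscheme} the $F$-trivial bundle $\mathcal{S}$ is semistable, so every line subbundle $L \subseteq \mathcal{S}$ satisfies $\deg L \leq 0$. Consequently $\mathcal{S}$ fails to be stable precisely when it admits a line subbundle of degree exactly $0$. It therefore suffices to show that $\mathcal{S}$ possesses a degree-$0$ line subbundle if and only if it sits in a short exact sequence $0 \to L \to \mathcal{S} \to L^{-1} \to 0$ with $L$ a $p^e$-torsion line bundle (i.e.\ $L^{\otimes p^e} \cong \mathcal{O}_X$, the quotient $L^{-1}$ being the dual of $L$).

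One direction is immediate: given such an extension, $L$ is torsion, so $\deg L = 0$, and $L \hookrightarrow \mathcal{S}$ is a degree-$0$ line subbundle; hence $\mathcal{S}$ is not stable.

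For the converse, I would suppose $\mathcal{S}$ is not stable and choose a destabilizing rank-$1$ subsheaf. Passing to its saturation $L$ inside $\mathcal{S}$ only increases the degree, while semistability gives $\deg L \leq 0$; thus $\deg L = 0$ and, $L$ being saturated, the quotient $\mathcal{S}/L$ is a line bundle. From $\det \mathcal{S} = L \otimes (\mathcal{S}/L) \cong \mathcal{O}_X$ one reads off $\mathcal{S}/L \cong L^{-1}$, giving the extension $0 \to L \to \mathcal{S} \to L^{-1} \to 0$. I would then apply $F^{e^\ast}$: since $X$ is smooth, the absolute Frobenius is flat (Kunz), so $F^{e^\ast}$ is exact and produces $0 \to F^{e^\ast}L \to \mathcal{O}_X^2 \to F^{e^\ast}L^{-1} \to 0$, using $F^{e^\ast}\mathcal{S} \cong \mathcal{O}_X^2$. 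Here $F^{e^\ast}L \cong L^{\otimes p^e}$ is a line subbundle of $\mathcal{O}_X^2$ of degree $p^e \cdot \deg L = 0$.

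The crux is to see that such a subbundle is trivial. The inclusion $L^{\otimes p^e} \hookrightarrow \mathcal{O}_X^2$ is nonzero, so one of its two components $L^{\otimes p^e} \to \mathcal{O}_X$ is nonzero, i.e.\ $H^0\bigl((L^{\otimes p^e})^{-1}\bigr) \neq 0$; as $(L^{\otimes p^e})^{-1}$ has degree $0$, any nonzero section is nowhere vanishing and exhibits an isomorphism $(L^{\otimes p^e})^{-1} \cong \mathcal{O}_X$. Hence $L^{\otimes p^e} \cong \mathcal{O}_X$, so $L$ is $p^e$-torsion and $\mathcal{S}$ is the desired extension. I expect the only delicate points to be the passage to the saturation, which is what guarantees a genuine subbundle so that exactness of $F^{e^\ast}$ keeps $F^{e^\ast}L$ a \emph{subbundle} of the trivial bundle, together with the final triviality argument; the remainder is formal.
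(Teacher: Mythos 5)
Your proof is correct and follows essentially the same route as the paper: reduce non-stability to an extension $0 \to \mathcal{L} \to \mathcal{S} \to \mathcal{L}^\vee \to 0$ with $\deg \mathcal{L} = 0$ (semistability from $F$-triviality plus the trivial determinant, with your saturation step making explicit why the quotient is a line bundle), then pull back by $F^{e}$ and use that a degree-zero line bundle on the curve with a nonzero section is trivial. The only cosmetic difference is the finish: the paper derives the contradiction from a dimension count in the long exact cohomology sequence of the pulled-back extension, whereas you read off $\mathcal{L}^{\otimes p^e} \cong \mathcal{O}_X$ directly from a nonzero component of the inclusion $F^{e^\ast}\mathcal{L} \hookrightarrow \mathcal{O}_X^2$.
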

\begin{proof}
We have that $\mathcal{S}$ is (strongly) semistable since it is trivialized by a suitable Frobenius pull back. Assume that $\mathcal{S}$ is not stable. That is we have a line bundle $\mathcal{L}$ of degree zero and an injective morphism $\mathcal{L} \to \mathcal{S}$. Since $\mathcal{S}$ is semistable the cokernel $\mathcal{G}$ is locally free. As the determinant of $\mathcal{S}$ is trivial we must have $\mathcal{G} \cong \mathcal{L}^\vee$. Consider now the cohomology of the pull backed short exact sequence:
\[
 \begin{xy}
  \xymatrix{0 \ar[r] & H^0(X, F^{e^\ast} \mathcal{L}) \ar[r] & H^0(X,F^{e^\ast}\mathcal{S}) \ar[r] & H^0(X, F^{e^\ast}\mathcal{L}^\vee) \ar[r] & \ldots}
 \end{xy}
\]
If $\mathcal{L}$ is not $p^e$-torsion $F^{e^\ast} \mathcal{L}$ has no global sections. Hence, we must have that $\dim_k H^0(X, F^{e^\ast}\mathcal{L}^\vee) \geq 2$. This is a contradiction since $F^{e^\ast}\mathcal{L}^\vee$ is a line bundle of degree zero.
\end{proof}

\begin{Le}
\label{LTrivial}
Let $X$ be a smooth projective curve over a field $k$. Let $\mathcal{S}$ be a locally free sheaf of rank $n$ with trivial determinant. If $\dim_k H^0(X,\mathcal{S}) \geq n$ and $\mathcal{S}$ is semistable then $\mathcal{S} \cong \mathcal{O}_X^n$. The same conclusion holds if $\dim_k H^0(X, \mathcal{S}) \geq n$ and if there are $n-1$ linearly independent sections $s_1, \ldots, s_{n-1}$ such that $s_1 \wedge \ldots \wedge s_{n-1} \in \bigwedge^{n-1} \mathcal{S}$ has no zeros.
\end{Le}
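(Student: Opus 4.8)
The plan is to prove both assertions by the same mechanism: split off a trivial summand and then finish with a cohomological splitting argument, using the hypothesis on global sections to kill the relevant extension class. I would induct on the rank $n$, the case $n=1$ being immediate since trivial determinant means $\mathcal{S} = \det\mathcal{S} = \mathcal{O}_X$. Throughout I assume $X$ geometrically connected so that $\dim_k H^0(X,\mathcal{O}_X)=1$.

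For the semistable case the first step is to record that on a semistable bundle of slope $0$ every nonzero global section $s$ is nowhere vanishing: saturating the image of $s\colon\mathcal{O}_X\to\mathcal{S}$ to a line subbundle $\mathcal{L}$ gives $0\le\deg\mathcal{L}\le\mu(\mathcal{S})=0$, forcing $\mathcal{L}\cong\mathcal{O}_X$ and hence $s$ nowhere vanishing. Such an $s$ yields a subbundle sequence $0\to\mathcal{O}_X\to\mathcal{S}\to\mathcal{Q}\to0$ with $\mathcal{Q}$ locally free of rank $n-1$ and $\det\mathcal{Q}\cong\mathcal{O}_X$. Two points must be checked before applying the inductive hypothesis to $\mathcal{Q}$: that $\mathcal{Q}$ is again semistable of slope $0$ (a destabilizing subsheaf of $\mathcal{Q}$ would, via its preimage in $\mathcal{S}$, contradict semistability of $\mathcal{S}$), and that $\dim_k H^0(X,\mathcal{Q})\ge n-1$ (immediate from the long exact sequence and $\dim_k H^0(X,\mathcal{O}_X)=1$). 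By induction $\mathcal{Q}\cong\mathcal{O}_X^{\,n-1}$, reducing us to splitting $0\to\mathcal{O}_X\to\mathcal{S}\to\mathcal{O}_X^{\,n-1}\to0$. Here the long exact sequence gives $\dim_k H^0(X,\mathcal{S})=n-\rk\delta$ for the connecting map $\delta\colon H^0(X,\mathcal{O}_X^{\,n-1})\to H^1(X,\mathcal{O}_X)$; the hypothesis $\dim_k H^0(X,\mathcal{S})\ge n$ forces $\delta=0$, which is precisely vanishing of the extension class in $H^1(X,\mathcal{O}_X)^{\,n-1}$, whence $\mathcal{S}\cong\mathcal{O}_X^{\,n}$.

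For the second assertion I would argue in one step rather than inducting. The $n-1$ nowhere vanishing sections assemble into a morphism $\varphi\colon\mathcal{O}_X^{\,n-1}\to\mathcal{S}$, and the crucial point is that $\varphi$ is a subbundle inclusion, i.e.\ the sections are everywhere (not merely $k$-)linearly independent; granting this, the cokernel is a line bundle $\mathcal{L}$ with $\mathcal{L}\cong\det\mathcal{S}\cong\mathcal{O}_X$, and the resulting sequence $0\to\mathcal{O}_X^{\,n-1}\to\mathcal{S}\to\mathcal{O}_X\to0$ splits by the identical section count (since $\dim_k H^0(X,\mathcal{S})\ge n$ forces the connecting map $H^0(X,\mathcal{O}_X)\to H^1(X,\mathcal{O}_X^{\,n-1})$, i.e.\ the extension class, to vanish). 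I expect the main obstacle to be exactly the verification that $\varphi$ is fiberwise injective: pointwise nonvanishing of each section together with $k$-linear independence does \emph{not} by itself prevent the sections from becoming parallel at special points, so this is where the hypotheses must be used in earnest, for instance by showing that a degeneracy of $\varphi$ would exhibit a subbundle of positive degree incompatible with the trivial determinant and the bound $\dim_k H^0(X,\mathcal{S})\ge n$. Once the subbundle property is secured, the remainder is the same cohomological splitting as in the semistable case.
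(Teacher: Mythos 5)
Your treatment of the semistable case is correct and complete, though it takes a different route from the paper: you peel off a single nowhere vanishing section at a time (using semistability only to show that a nonzero section of a slope-zero semistable bundle generates a trivial line subbundle) and then kill the extension class of $0 \to \mathcal{O}_X \to \mathcal{S} \to \mathcal{O}_X^{n-1} \to 0$ by the section count via the connecting homomorphism. The paper instead uses all $n-1$ sections at once: semistability forces the image of $\mathcal{O}_X^{n-1} \to \mathcal{S}$ to be saturated, so the cokernel is a line bundle, necessarily $\mathcal{O}_X$ by the determinant hypothesis, and the $n$th global section then splits the surjection directly (a nonzero endomorphism of $\mathcal{O}_X$ being a unit). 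Your induction trades the saturation argument for a cohomological splitting; both are fine, and your explicit remark that $H^0(X, \mathcal{O}_X) = k$ is needed is a point the paper uses tacitly as well.

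For the second assertion, however, your suspicion that fiberwise injectivity of $\varphi: \mathcal{O}_X^{n-1} \to \mathcal{S}$ is ``where the hypotheses must be used in earnest'' cannot be carried out, because for $n \geq 3$ the statement as literally written is false: no argument will close this gap. Take $X$ an elliptic curve, $D$ a divisor of degree $2$, and $\mathcal{S} = \mathcal{O}_X \oplus \mathcal{O}_X(D) \oplus \mathcal{O}_X(-D)$. Then $\det \mathcal{S} \cong \mathcal{O}_X$ and $\dim_k H^0(X, \mathcal{S}) = 1 + 2 + 0 = 3 = n$, and the sections $s_1 = (1,0,0)$ and $s_2 = (1,u,0)$, with $0 \neq u \in H^0(X, \mathcal{O}_X(D))$, are $k$-linearly independent and nowhere vanishing (the first coordinate never vanishes); yet $\mathcal{S} \not\cong \mathcal{O}_X^3$, since $\mathcal{O}_X(D)$ destabilizes it. At the zeros of $u$ the two sections become parallel, which is exactly the degeneracy you feared, and neither the trivial determinant nor the bound $\dim_k H^0(X,\mathcal{S}) \geq n$ rules it out. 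Notably, the paper's own proof passes over this point with the parenthetical claim that the cokernel is locally free ``if the sections have no zeros,'' which the same example refutes for $n \geq 3$; the second assertion must be read with ``linearly independent'' meaning linearly independent in every fiber, i.e.\ the sections span a trivial subbundle, and with that reading the cokernel is automatically locally free and both your splitting argument and the paper's go through. In every application in the paper one has $n = 2$, where a single nowhere vanishing section is automatically a subbundle inclusion, so nothing downstream is affected.
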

\begin{proof}
Fix $n-1$ linearly independent global sections $s_1, \ldots, s_{n-1}$. These yield an injective morphism $\mathcal{O}_X^{n-1} \to \mathcal{S}$. By semistability of $\mathcal{S}$ the cokernel is locally free (if the section $s_1 \wedge \ldots \wedge s_n$ has no zeros this also holds). As $\det \mathcal{S} = \mathcal{O}_X$ the cokernel is furthermore isomorphic to $\mathcal{O}_X$. We therefore have a short exact sequence
\[
\begin{xy}
 \xymatrix{0 \ar[r] & \mathcal{O}_X^{n-1} \ar[r] & \mathcal{S} \ar[r]^p &\mathcal{O}_X \ar[r] & 0.}
\end{xy}
 \]
Fix a global section $t$ which is not in the $k$-span of the $s_1, \ldots, s_{n-1}$. Then $p \circ t$ is an isomorphism since it is not the zero map. Hence, $p$ has a section and the sequence splits.
\end{proof}

The following lemma is well-known but we include a proof for the convenience of the reader.

\begin{Le}
\label{SufficientStable}
Let $Y$ be a smooth projective curve over a field $k$ of positive characteristic $p > 0$. Let $\mathcal{S}$ be a rank two vector bundle on $Y$ that is semistable but not strongly semistable. Then $\mathcal{S}$ is stable.
\end{Le}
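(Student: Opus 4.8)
The plan is to prove the contrapositive: if $\mathcal{S}$ is semistable but \emph{not} stable, then it is strongly semistable. Since $\mathcal{S}$ has rank two, the failure of stability together with semistability means there is a sub-line bundle $\mathcal{L} \hookrightarrow \mathcal{S}$ whose degree equals the slope $\mu(\mathcal{S}) = (\deg \mathcal{S})/2$. The quotient $\mathcal{M} = \mathcal{S}/\mathcal{L}$ is torsion-free, hence locally free on the curve, and necessarily a line bundle of degree $\deg \mathcal{S} - \deg \mathcal{L} = \mu(\mathcal{S})$ as well. This produces a short exact sequence $0 \to \mathcal{L} \to \mathcal{S} \to \mathcal{M} \to 0$ with $\deg \mathcal{L} = \deg \mathcal{M} = \mu(\mathcal{S})$.

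Next I would apply the absolute Frobenius. Using that $F^{e^\ast}$ is flat, hence exact, and that it multiplies the degree of a line bundle by $p^e$ (as $F^{e^\ast} \mathcal{L} \cong \mathcal{L}^{\otimes p^e}$), one obtains for every $e \geq 1$ a short exact sequence $0 \to F^{e^\ast}\mathcal{L} \to F^{e^\ast}\mathcal{S} \to F^{e^\ast}\mathcal{M} \to 0$ in which both end terms are line bundles of degree $p^e \mu(\mathcal{S}) = \mu(F^{e^\ast}\mathcal{S})$.

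The key step is then to show that $F^{e^\ast}\mathcal{S}$ is semistable for every $e$. I would take an arbitrary rank-one subsheaf $\mathcal{N} \hookrightarrow F^{e^\ast}\mathcal{S}$ and bound its degree by composing with the projection onto $F^{e^\ast}\mathcal{M}$. If the composite is nonzero, then, being a nonzero homomorphism of rank-one sheaves into the line bundle $F^{e^\ast}\mathcal{M}$, it forces $\deg \mathcal{N} \leq \deg F^{e^\ast}\mathcal{M} = p^e \mu(\mathcal{S})$. If instead the composite vanishes, then $\mathcal{N}$ factors through $F^{e^\ast}\mathcal{L}$, giving again $\deg \mathcal{N} \leq \deg F^{e^\ast}\mathcal{L} = p^e \mu(\mathcal{S})$. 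In either case $F^{e^\ast}\mathcal{S}$ admits no destabilizing sub-line bundle and is therefore semistable; as this holds for all $e$, the bundle $\mathcal{S}$ is strongly semistable, contradicting the hypothesis. Hence $\mathcal{S}$ must be stable.

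I do not anticipate a genuine obstacle: once one passes to the contrapositive, the argument is elementary, the whole point being that the same-slope sub-line bundle arising from non-stability persists, degree for degree, under every Frobenius pull back. The only item requiring mild care is the bookkeeping over a possibly non-perfect field $k$, where the absolute Frobenius is not $k$-linear; this however affects neither the flatness of $F$ nor the identity $\deg F^{e^\ast}(\,\cdot\,) = p^e \deg(\,\cdot\,)$ for line bundles, which are all that the proof invokes.
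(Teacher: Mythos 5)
Your proposal is correct and follows essentially the same route as the paper: prove the contrapositive by extracting a same-slope sub-line bundle (with locally free quotient) from non-stability, pull the extension back by $F^{e}$, and bound the degree of any line subbundle via the zero/nonzero dichotomy of its composite with the projection onto the quotient. The only cosmetic differences are that the paper first disposes of the odd-degree case and twists to degree zero before running this argument, and that in your write-up one should note that $\mathcal{L}$ may be replaced by its saturation (semistability forces the saturation to have the same degree) so that the quotient is indeed locally free.
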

\begin{proof}
If $\mathcal{S}$ is semistable and $\deg \mathcal{S}$ is odd then it is stable since then the rank and degree are coprime. If $\deg \mathcal{S}$ is even we may assume, after twisting, that $\deg \mathcal{S} = 0$.

We will show that if $\mathcal{S}$ is semistable but not stable then it is strongly semistable. After twisting with a degree zero line bundle we may assume that we have a short exact sequence $0 \to \mathcal{O}_Y \to \mathcal{S} \to \mathcal{L} \to 0$, where $\mathcal{L}$ is a line bundle. Consider the $e$th Frobenius pull back of this sequence and let $\mathcal{G} \subset F^{e^\ast}\mathcal{S}$ be a line bundle. The morphism $\mathcal{G} \to F^{e^\ast}\mathcal{L}$ is either zero or injective. In the first case we must have that $\mathcal{G}$ injects into $\mathcal{O}_Y$, hence has degree $\leq 0$. In the second case, it injects into $F^{e^\ast}\mathcal{L}$, hence its degree must likewise be $\leq 0$.
\end{proof}

The key tool to ensure that a strongly semistable bundle defined over $X_K$ does not descend to a bundle over $X_k$, where $k \subseteq K$ is an extension of algebraically closed fields and $X$ a smooth projective curve, is the following

\begin{Prop}
\label{NonConstantFamily}
Let $X$ be a smooth projective curve over an algebraically closed field $k$ of positive characteristic. Let $V$ be a discrete valuation ring containing $k$ with maximal ideal $(t)$, residue field $k$ and quotient field $K$. Let $\mathcal{S}$ be a coherent torsion free sheaf on $X \times_k V$ such that
\begin{enumerate}[(i)]
 \item{The restriction $\mathcal{S}_t$ of $\mathcal{S}$ to the generic fiber $X_K$ is locally free and strongly semistable.}
\item{The restriction $\mathcal{S}_0$ of $\mathcal{S}$ to the special fiber $X_k$ is locally free and stable but not strongly semistable.} 
\end{enumerate}
Then $\mathcal{S}_t$ does not stem from the special fiber, i.\,e.\ there is no locally free sheaf $\mathcal{F}$ on $X \times_k V/(t)$ such that $p^\ast \mathcal{F} \cong \mathcal{S}_t$, where $p: X \times_k K \to X$ is the base change morphism.
\end{Prop}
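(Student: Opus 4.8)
The plan is to argue by contradiction and to exploit that the coarse moduli space of semistable sheaves on $X$ is separated, so that a family over $\Spec V$ is pinned down on the special fibre by its generic fibre. Suppose then that there were a locally free sheaf $\mathcal{F}$ on $X$ with $p^\ast \mathcal{F} \cong \mathcal{S}_t$. The first thing I would record is that (strong) semistability descends along the extension $k \subseteq K$ of algebraically closed fields. Since the absolute Frobenius is functorial, $p \circ F_{X_K} = F_X \circ p$, whence $F^{e^\ast}(p^\ast \mathcal{F}) \cong p^\ast(F^{e^\ast}\mathcal{F})$ for all $e$; combined with the fact that a sheaf on $X$ is semistable precisely when its pullback to $X_K$ is, the strong semistability of $\mathcal{S}_t$ in (i) forces $\mathcal{F}$ to be strongly semistable on $X$, and in particular semistable.

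Next I would place two families over $V$ side by side. Writing $q \colon X \times_k V \to X$ for the projection, set $\mathcal{G} = q^\ast \mathcal{F}$, the constant family. As $X$ is geometrically integral and $\mathcal{S}$ is torsion free, $\mathcal{S}$ has no $t$-torsion and is therefore flat over $V$; both fibres $\mathcal{S}_t$ and $\mathcal{S}_0$ are semistable by (i) and (ii) (the latter even stable), so $\mathcal{S}$ is a flat family of semistable sheaves of the relevant rank and degree, and so is $\mathcal{G}$. Let $M$ be the (projective, hence separated) coarse moduli space of semistable sheaves of this numerical type on $X$. The two families induce morphisms $\phi,\psi \colon \Spec V \to M$, and since $\mathcal{G}_t = p^\ast \mathcal{F} \cong \mathcal{S}_t$ they agree on the generic point. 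This is the crucial step: by the valuative criterion of separatedness a $K$-point of $M$ admits at most one extension to a $V$-point, so $\phi = \psi$, and evaluation at the closed point yields $[\mathcal{S}_0] = [\mathcal{F}]$ in $M(k)$.

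Finally I would upgrade this equality of $S$-equivalence classes to an isomorphism. Because $\mathcal{S}_0$ is stable it is the unique sheaf in its $S$-equivalence class, so any semistable sheaf mapping to $[\mathcal{S}_0]$ — in particular $\mathcal{F}$ — is isomorphic to $\mathcal{S}_0$. But then $\mathcal{S}_t \cong p^\ast \mathcal{F} \cong p^\ast \mathcal{S}_0$, and the descent of strong semistability from the first paragraph turns the strong semistability of $\mathcal{S}_t$ into strong semistability of $\mathcal{S}_0$, contradicting (ii). I expect the real work to lie not in this short deduction but in cleanly justifying the inputs: the descent of strong semistability along $k \subseteq K$, and above all the existence of a separated moduli space together with the fact that a flat family of semistable sheaves genuinely induces a morphism to it. The conceptual heart is the valuative criterion, which compels the two a priori different degenerations $\mathcal{S}_0$ and $\mathcal{F}$ of the common generic bundle $\mathcal{S}_t$ to coincide — which they cannot, one being strongly semistable and the other not.
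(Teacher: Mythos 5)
Your proof is correct, and it reaches the contradiction by the same overall mechanism as the paper --- two flat extensions of the common generic bundle $\mathcal{S}_t$ over $\Spec V$, namely $\mathcal{S}$ and the constant family pulled back from $\mathcal{F}$, are forced to have the same special fibre, which is then simultaneously strongly semistable (by descent along $k \subseteq K$ via functoriality of the absolute Frobenius, exactly as in your first paragraph) and not strongly semistable. The difference lies in the tool you invoke for the uniqueness of the special fibre: the paper introduces the retraction $c \colon X \times_k V \to X$ coming from the $k$-algebra structure of $V$, observes that $c^\ast \mathcal{F}$ and $\mathcal{S}$ are both extensions of $\mathcal{S}_t$, and cites Langton's theorem directly to conclude $\mathcal{F} \cong \mathcal{S}_0$, whereas you route the argument through Seshadri's projective coarse moduli space $M$ and the valuative criterion of separatedness. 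Your route is legitimate, and you handle the one genuine subtlety correctly: a $k$-point of $M$ only records an $S$-equivalence class, and it is the stability of $\mathcal{S}_0$ that upgrades $[\mathcal{S}_0] = [\mathcal{F}]$ to an isomorphism $\mathcal{F} \cong \mathcal{S}_0$ --- a point the paper leaves implicit, since Langton's uniqueness is likewise a priori only up to $S$-equivalence of special fibres. But your version is the heavier one: the separatedness and properness of $M$ are themselves proved via Langton, and making a flat family over the non-finite-type base $\Spec V$ induce a morphism to $M$ requires an extra step (e.g.\ passing through a $V$-point of the relevant Quot scheme), which you correctly flag as the ``real work.'' The paper's appeal to Langton is thus the more economical argument --- it is in effect your valuative criterion applied at the level of families, before any coarse moduli space is constructed --- while your formulation buys a cleaner conceptual picture at the cost of assuming the existence and projectivity of $M$.
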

\begin{proof}
Denote by $j: X \times_k V/(t) \to X \times_k V$ the closed immersion of the special fiber and by $i: X \times_k K \to X \times_k V$ the open immersion of the generic fiber. Also note that we have a morphism $c: X \times_k V \to X \times_k V/(t)$ due to the $k$-algebra structure on $V$. In particular, $c \circ j = \id_{X \times_k V/(t)}$ and $c \circ i = p$.

Assume now that such $\mathcal{F}$ exists.
By construction $c^\ast \mathcal{F}$ and $\mathcal{S}$ agree on generic fibers. Since $\mathcal{S}_0$ is stable we must have by \cite[Theorem on page 99]{langton} (or \cite[Proposition 28.6]{hartshornedeformation}) that $\mathcal{S}_0 \cong \mathcal{F}$. But $\mathcal{F}$ has to be strongly semistable since $p^\ast \mathcal{F} = \mathcal{S}_t$ is. Since $\mathcal{S}_0$ is not strongly semistable we obtain the desired contradiction.
\end{proof}

\begin{Bem}
We would like to mention that Mehta and Subramanian have an unpublished preprint where they explore the relation of the failure of Nori's conjecture with the non-properness of the functor of $F$-trivial bundles.
\end{Bem}

\section{An example in characteristic $2$}

In this section we provide an infinite family of curves in $\mathbb{P}^2_k$, $k$ an algebraically closed field of characteristic $2$, where base change of the fundamental group scheme fails.

\begin{Theo}
\label{TFTrivial}
Consider the syzygy bundle $\mathcal{S} = \Syz(x^2, y^2, tz^2 + s xy)(3)$ on $X_{n,l} \times_{k} \Spec k[s,t]$, where $k$ is a field of characteristic $2$,
$n \geq 2$ and $0 \leq l \leq \lfloor\frac{2^{n-1}}{3} - \frac{1}{2}\rfloor$ are integers and $X_{n,l}$ is the smooth projective curve given by
\[ \Proj k[x,y,z]/(x^{2^n + 2l + 1} + y^{2^n + 2l + 1} + z^{2^n + 2l +1}).\] 
Then $\mathcal{S}$ restricted to the generic fiber is stable and trivialized by the $n$th Frobenius pull back.
The trivializing syzygies are
\begin{align*}
s_1 &= (x^{3 + 6l} z^{2^n - 3 - 6l} t^{2^{n+1}},\, x^{1 + 2l} y^{2 + 4l} z^{2^n - 3 - 6l} t^{2^n + 1} + y^{1 + 2l} z^{2^n - 2l - 1} t^{2^n} s\\& + x^{2^n} s^{2^n + 1},\, x^{1+2l} z^{2^n - 2l - 1} t^{2^n} + s^{2n} y^{2^n}),\\
s_2 &=(y^{1 + 2l} x^{2 + 4l} z^{2^n - 3 - 6l} t^{2^n + 1} + x^{1 + 2l} z^{2^n - 2l - 1} t^{2^n} s+ y^{2^n} s^{2^n +1},\\& y^{3 + 6l} z^{2^n - 3 - 6l} t^{2^{n+1}},\, y^{1+2l} z^{2^n - 2l - 1} t^{2^n} + s^{2^n} x^{2^n}).  
\end{align*}
\end{Theo}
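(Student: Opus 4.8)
The plan is to work over the function field $K=k(s,t)$, i.e.\ on the generic fibre $X_{n,l,K}$, and to extract the two assertions --- triviality after $F^{n\ast}$ and stability --- from the explicit syzygies. First I would record the numerical data. The three forms $x^2,y^2,tz^2+sxy$ have degree $2$ and no common zero on $X_{n,l,K}$ (if $x=y=0$ then, since $t\neq 0$ in $K$, also $z=0$), so $\Syz(x^2,y^2,tz^2+sxy)$ is a rank $2$ bundle and $\mathcal{S}=\Syz(x^2,y^2,tz^2+sxy)(3)$ has $\det\mathcal{S}\cong\mathcal{O}$. As the absolute Frobenius is flat on the smooth curve $X_{n,l,K}$, pulling back the defining sequence and using the freshman's dream in characteristic $2$ gives
\[
F^{n\ast}\mathcal{S}\cong\Syz\bigl(x^{2^{n+1}},\,y^{2^{n+1}},\,(tz^2+sxy)^{2^n}\bigr)(3\cdot 2^n),\qquad (tz^2+sxy)^{2^n}=t^{2^n}z^{2^{n+1}}+s^{2^n}x^{2^n}y^{2^n}.
\]
A global section of this bundle is exactly a syzygy of total degree $3\cdot 2^n$ of these three forms, whose three entries therefore have degree $2^n$; this matches the degrees of the displayed $s_1,s_2$.

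The heart of the proof, and the step I expect to be the main obstacle, is the verification that $s_1$ and $s_2$ really are such syzygies, i.e.\ that for $j=1,2$
\[
(s_j)_1\,x^{2^{n+1}}+(s_j)_2\,y^{2^{n+1}}+(s_j)_3\bigl(t^{2^n}z^{2^{n+1}}+s^{2^n}x^{2^n}y^{2^n}\bigr)\equiv 0\pmod{x^{d}+y^{d}+z^{d}},\quad d=2^n+2l+1 .
\]
This is a direct but lengthy polynomial identity. I would organise it by grouping the terms according to their $(s,t)$-bidegree --- which the curve relation, involving only $x,y,z$, preserves --- and then reduce each group modulo $x^d+y^d+z^d$, repeatedly using $z^{d}\equiv x^{d}+y^{d}$ together with $(x^{d}+z^{d})^2=x^{2d}+z^{2d}$ and its analogues in characteristic $2$. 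This is precisely where all three hypotheses enter: the characteristic $2$ freshman's dream collapses the mixed terms, the exact degree $d=2^n+2l+1$ forces the various exponents to add up to multiples of $d$ so that the reductions cancel, and the bound $l\le\lfloor 2^{n-1}/3-\tfrac12\rfloor$ guarantees $2^n-3-6l\ge 0$, so that the exponents occurring are genuinely non-negative and the monomials are honest sections. Being mechanical, the identity is well suited to a computer-algebra check.

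Granting this, triviality follows quickly. The sections $s_1,s_2$ are $K$-linearly independent, since they involve distinct monomials in $s,t$, so $\dim_K H^0(X_{n,l,K},F^{n\ast}\mathcal{S})\ge 2$; and one checks that, say, $s_1$ is nowhere vanishing, i.e.\ its three entries have no common zero on $X_{n,l,K}$. As $F^{n\ast}\mathcal{S}$ has rank $2$ and trivial determinant, Lemma~\ref{LTrivial} then yields $F^{n\ast}\mathcal{S}\cong\mathcal{O}_{X_{n,l,K}}^2$. (Equivalently, the pair defines a map $\mathcal{O}^2\to F^{n\ast}\mathcal{S}$ of rank $2$ bundles with trivial determinant, and it suffices to see that $s_1\wedge s_2\in H^0(\det F^{n\ast}\mathcal{S})=K$ is a nonzero constant.) Hence $\mathcal{S}_t$ is $F$-trivial, and in particular strongly semistable.

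It remains to prove stability. Since $\mathcal{S}_t$ is a rank $2$ bundle with trivial determinant trivialised by $F^{n\ast}$, the first Lemma of this section applies: $\mathcal{S}_t$ is stable unless it is an extension of a $p^n$-torsion line bundle $\mathcal{L}$ by $\mathcal{L}^\vee$. I would rule out the case $\mathcal{L}\cong\mathcal{O}$ by showing $H^0(X_{n,l,K},\mathcal{S}_t)=0$, i.e.\ that the three quadrics admit no nonzero linear syzygy of total degree $3$; this also shows $\mathcal{S}_t$ is indecomposable. For a nontrivial $\mathcal{L}$ I would use openness of stability in the flat family $\mathcal{S}$ over $\Spec k[s,t]$: it is enough to exhibit one closed fibre that is stable, and the fibre over $s=0$ is $\Syz(x^2,y^2,z^2)(3)=F^\ast\bigl(\Syz(x,y,z)(\,\cdot\,)\bigr)$, the Frobenius pull-back of (a twist of) $\Omega^1_{\mathbb{P}^2}|_{X_{n,l}}$, whose stability can be checked directly --- if it is semistable but not strongly semistable this is automatic by Lemma~\ref{SufficientStable} --- so that stability propagates to the generic fibre. (When $X_{n,l}$ has $p$-rank $0$, for instance the Hermitian curve obtained for $l=0$, the only $p^n$-torsion line bundle is $\mathcal{O}$, and $H^0(\mathcal{S}_t)=0$ alone settles stability.)
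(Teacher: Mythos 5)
Your outline coincides with the paper's own proof in both pillars: (a) triviality of $F^{n\ast}\mathcal{S}$ on the generic fibre is read off from the two explicit sections of $\Syz(x^{2^{n+1}},y^{2^{n+1}},t^{2^n}z^{2^{n+1}}+s^{2^n}x^{2^n}y^{2^n})(3\cdot 2^n)$ via Lemma~\ref{LTrivial}, and (b) generic stability is propagated by openness of geometric stability (\cite[Proposition 2.3.1]{huybrechtslehn}) from the fibre $\Syz(x^2,y^2,z^2)(3)$, which is stable because it is semistable but not strongly semistable (Lemma~\ref{SufficientStable}). Within (a) you use the nowhere-vanishing branch of Lemma~\ref{LTrivial}; this is legitimate and verifiable (on the generic fibre, $x=0$ forces the third entry of $s_1$ to be $s^{2^n}y^{2^n}\neq 0$, and $z=0$ forces the second entry to be $x^{2^n}s^{2^n+1}\neq 0$, using that $2^n-3-6l$ is odd, hence $\geq 1$), whereas the paper instead first proves generic \emph{strong semistability} by degenerating to $(s,t)=(1,0)$, where $\mathcal{S}_{1,0}=\Syz(x^2,y^2,xy)(3)\cong\mathcal{O}^2$, invoking Miyaoka's openness results \cite{miyaokachern}, and then applies the semistable branch of Lemma~\ref{LTrivial}. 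Two smaller points: your detour through the first lemma of Section 1 (extension of a $p^n$-torsion line bundle by its dual) and $H^0(\mathcal{S}_t)=0$ is redundant, since openness of stability alone settles stability with no case distinction on $\mathcal{L}$; and ``distinct monomials in $s,t$'' does not prove $K$-linear independence over $K=k(s,t)$, where $s,t$ are scalars --- argue as the paper does, e.g.\ by specializing to $(s,t)=(1,0)$, where $s_1,s_2$ become $(0,x^{2^n},y^{2^n})$ and $(y^{2^n},0,x^{2^n})$.

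The one substantive gap is your assertion that stability of $\Syz(x^2,y^2,z^2)(3)$ on $X_{n,l}$ ``can be checked directly''. Half of it can: squaring the curve equation gives the syzygy $(x^{2d-2^{n+1}},y^{2d-2^{n+1}},z^{2d-2^{n+1}})$ of $F^{n\ast}\Syz(x^2,y^2,z^2)$, $d=2^n+2l+1$, of total degree $2d$; the hypothesis on $l$ is exactly $d\leq\frac{4}{3}2^n$, so $2d<3\cdot 2^n$ and this syzygy destabilizes --- this is the content of the paper's citation \cite[Corollary 2]{brennermiyaoka} (so the bound on $l$ does double duty, not only making the exponents in $s_1,s_2$ non-negative). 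But \emph{semistability} of $\Syz(x^2,y^2,z^2)$ on the Fermat curve of degree $d$ is not a mechanical check: the paper has to cite \cite[Proposition 6.2]{brennercomputationtight}, a nontrivial tight-closure computation. (Alternatively one could use Langer's restriction theorem \cite[Theorem 5.2]{langersemistable}, which needs $d>\frac{3}{2}\cdot 4+\frac{1}{2}$, i.e.\ $d\geq 7$; this covers all cases except $n=2$, $l=0$, $d=5$, which still requires Brenner's result.) Without such an input, your stability argument --- and hence the theorem --- is not yet proved; everything else in your proposal matches the paper, including leaving the syzygy identity itself to a routine (computer-assisted) verification.
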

\begin{proof}
We fix a Fermat curve $X$ as in the theorem and omit the index.
First we prove that $\mathcal{S}$ is strongly semistable on the generic fiber. On the special fiber $s= 1, t = 0$ we obtain $\mathcal{S}_{1,0} = \Syz(x^2, y^2, xy)(3)$ which is already defined on $\mathbb{P}^1_k = \Proj k[x,y]$ and splits as $\mathcal{O}^2_X$ (the bundle admits the global sections $(-y,0,x)$ and $(0,-x,y)$, now use Lemma \ref{LTrivial}). Hence, $\mathcal{S}_{1,0}$ is strongly semistable. By \cite[Corollary 3.12 and the remark before Proposition 5.2]{miyaokachern} this then also holds generically.

One easily verifies that $s_1$ and $s_2$ are indeed syzygies of $F^{n^\ast} \mathcal{S}$. As they are generically $k$-linearly independent (set $t =0, s =1$ and look at the last component of $s_1, s_2$) Lemma \ref{LTrivial} implies that $F^{n^\ast} \mathcal{S}$ is trivial on the generic fiber. Moreover, these syzygies are well-defined (i.\,e.\,the exponents are natural numbers) precisely if $0 \leq l \leq \lfloor\frac{2^{n-1}}{3} - \frac{1}{2}\rfloor$. 

Finally we show that $\mathcal{S}$ is generically stable. Consider the fiber $s=0, t=1$. Note that $2^n < 2^n + 2l + 1 \leq \frac{4}{3} 2^n < 3 \cdot 2^{n-1}$, so applying \cite[Corollary 2]{brennermiyaoka} we see that $\mathcal{S}_{0,1}$ is not strongly semistable. It is however semistable by \cite[Proposition 6.2]{brennercomputationtight}. Hence, Lemma \ref{SufficientStable} yields that it is (geometrically) stable. Being geometrically stable is an open property (e.\,g.\ by \cite[Proposition 2.3.1]{huybrechtslehn}) hence this also holds generically.
\end{proof}

\begin{Ko}
\label{KoFtrivial}
Let $n \geq 2$ be an integer and $0 \leq l \leq \lfloor\frac{2^{n-1}}{3} - \frac{1}{2}\rfloor$ and consider the smooth projective curve \[X = \Proj k[x,y,z]/(x^{2^n + 2l + 1} + y^{2^n + 2l + 1} + z^{2^n  + 2l + 1}),\] where $k$ is a field of characteristic $2$. Consider the syzygy bundle \[\mathcal{S} = \Syz(x^2, y^2, tz^2 + (1-t)xy)(3) \text{ over } X \times_k \Spec k[t]_{(t-1)}.\] Then $\mathcal{S}$ is generically stable and trivialized by the $n$th Frobenius pull back while $\mathcal{S}$ restricted to the special fiber is stable but not strongly semistable.   
\end{Ko}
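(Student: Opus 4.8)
The plan is to deduce the corollary from Theorem \ref{TFTrivial} by specialising the two-parameter family to the line $s = 1-t$. Concretely, the substitution $s \mapsto 1-t$ defines a morphism $\Spec k[t] \to \Spec k[s,t]$, and the bundle of the corollary is the pullback of $\Syz(x^2,y^2,tz^2+sxy)(3)$ along this morphism, subsequently localised at the prime $(t-1)$. The decisive observation is that the two fibres carrying all the geometric content of the theorem's proof both lie on this line: the fibre $s=1,t=0$ (where the bundle becomes $\Syz(x^2,y^2,xy)(3) \cong \mathcal{O}_X^2$) and the fibre $s=0,t=1$ (where it becomes $\mathcal{S}_{0,1} = \Syz(x^2,y^2,z^2)(3)$). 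Since the generic fibre of $\Spec k[t]_{(t-1)}$ agrees with that of $\Spec k[t]$, all generic statements may be verified after working over the larger base $\Spec k[t]$, where the closed points $t=0$ and $t=1$ are available.

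For the special fibre I would invoke the computation already carried out in the proof of Theorem \ref{TFTrivial}. At $t=1$ the bundle specialises to $\mathcal{S}_{0,1} = \Syz(x^2,y^2,z^2)(3)$, which by \cite[Corollary 2]{brennermiyaoka} is not strongly semistable, while by \cite[Proposition 6.2]{brennercomputationtight} it is semistable; Lemma \ref{SufficientStable} then forces it to be stable. This yields the assertion on the special fibre verbatim. Generic stability now follows formally: geometric stability is an open condition on the base (\cite[Proposition 2.3.1]{huybrechtslehn}), and since $\Spec k[t]_{(t-1)}$ is the spectrum of a discrete valuation ring, the only open subset containing the closed point $t=1$ is the whole space, so stability propagates to the generic point.

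For the trivialisation I would first record generic strong semistability: over $\Spec k[t]$ the fibre at $t=0$ (that is, $s=1,t=0$) is $\Syz(x^2,y^2,xy)(3) \cong \mathcal{O}_X^2$ and hence strongly semistable, so by \cite[Corollary 3.12 and the remark before Proposition 5.2]{miyaokachern} the generic fibre $\mathcal{S}_t$ is strongly semistable as well; in particular $F^{n^\ast}\mathcal{S}_t$ is semistable. Next I would substitute $s = 1-t$ into the syzygies $s_1, s_2$ of Theorem \ref{TFTrivial}; since Frobenius pullback commutes with base change, the resulting sections are again syzygies of $F^{n^\ast}\mathcal{S}$. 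Their $k(t)$-linear independence is checked by restricting to $t=0$, where the last components become $y^{2^n}$ and $x^{2^n}$, which are linearly independent. As $F^{n^\ast}\mathcal{S}_t$ has rank two and trivial determinant (inherited from Theorem \ref{TFTrivial}) and now admits two linearly independent global sections, Lemma \ref{LTrivial} gives $F^{n^\ast}\mathcal{S}_t \cong \mathcal{O}_{X_{k(t)}}^2$.

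The argument is almost entirely formal once the key observation is in place, so I do not anticipate a serious obstacle. The only point requiring genuine care is bookkeeping: the openness arguments for stability and strong semistability, as well as the linear-independence check, rely on the closed points $t=0$ and $t=1$, which do not themselves lie in the localisation $\Spec k[t]_{(t-1)}$. I would therefore run all of these over $\Spec k[t]$ and transport the conclusions to the generic fibre, using that the two generic fibres coincide; the localisation at $(t-1)$ is needed only to furnish the discrete valuation ring for the subsequent application of Proposition \ref{NonConstantFamily}.
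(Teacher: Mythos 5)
Your proposal is correct and follows essentially the same route as the paper: specialise the two-parameter family of Theorem \ref{TFTrivial} along $s = 1-t$, check linear independence of $s_1, s_2$ at $t=0$ to get $F$-triviality via Lemma \ref{LTrivial}, propagate strong semistability from the $\mathcal{O}_X^2$ fibre by Miyaoka and stability from the $\Syz(x^2,y^2,z^2)(3)$ fibre by openness, the latter being exactly the paper's special-fibre analysis. One trivial slip in your closing remark: the point $t=1$ \emph{does} lie in $\Spec k[t]_{(t-1)}$ (it is the closed point of the discrete valuation ring, as your own openness argument uses); only $t=0$ requires passing to the larger base $\Spec k[t]$, which you handle correctly.
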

\begin{proof}
The bundle $\mathcal{S}$ is obtained from the syzygy bundle in Theorem \ref{TFTrivial} by specializing $s \mapsto 1 -t$ and localizing $X \times \Spec k[t]$ at $(t-1)$.

Generically the global sections $s_1, s_2$ are then still linearly independent (again set $t=0$ and look at the last component) and $\mathcal{S}_t$ is strongly semistable so that $F^{n^\ast} \mathcal{S}_t$ is trivial. Likewise, the same argument as in Theorem \ref{TFTrivial} shows that $\mathcal{S}$ is generically stable.

On the special fiber we obtain $\mathcal{S}_{0} = \Syz(x^2, y^2, z^2)(3)$. Note that this is the bundle $\mathcal{S}_{0,1}$ considered in the last part of the proof of Theorem \ref{TFTrivial} which was shown to be stable but not strongly semistable.
\end{proof}

\begin{Bem}
We would like to point out that there are further cases, where the bundle $\mathcal{S} = \Syz(x^2, y^2, tz^2 + (1-t)xy)(3)$ on certain Fermat curves is generically stable and $F$-trivial and stable but not strongly semistable on the special fiber.

For instance, consider the Fermat curve $Y$ given by the equation $x^{11} + y^{11} = z^{11}$. Then by similar arguments as in Theorem \ref{TFTrivial} one can show that $\mathcal{S}$ on $Y \times_k \Spec k[t]_{(t-1)}$ is generically strongly semistable and stable but not strongly semistable on the special fiber. Moreover, one verifies that $F^{5^\ast}\mathcal{S}_{t}$ has the $\overline{\mathbb{F}}_2$-linearly independent global sections

\begin{align*}
s_1 &= (y^{32}t^{128} + x^{26}y^3z^3t^{128} + x^{14}y^2z^{16}t^{128} + x^2y^{12}z^{18}t^{128} + x^{12}z^{20}t^{128} \\&+ x^{14}y^2z^{16}t^{96} + x^{12}z^{20}t^{96} + x^2y^{12}z^{18}t^{64} + x^{12}z^{20}t^{64} + x^{12}z^{20}t^{32} + y^{32},\\& y^{25}z^7t^{128} + xy^{15}z^{16}t^{128} + x^{11}y^3z^{18}t^{128} + x^2y^{22}z^3t^{128} + xy^{15}z^{16}t^{96} + y^{25}z^7t^{64} \\&+ x^{11}y^3z^{18}t^{64},\, x^{32}t^{96} + x^{24}y^3z^5t^{96} + x^2y^{25}z^5t^{96} + x^{23}y^2z^7t^{96} + x^{12}y^{13}z^7t^{96} \\&+ xy^{24}z^7t^{96} + y^{12}z^{20}t^{96} + x^{32}t^{64} + x^{23}y^2z^7t^{64} + x^{12}y^{13}z^7t^{64}+ xy^{24}z^7t^{64}\\& + x^{32}t^{32} + y^{12}z^{20}t^{32} + x^{32}),
\end{align*}
\begin{align*}
s_2 &= (x^{27}y^2z^3t^{128} + x^{14}y^{11}z^7t^{128} + x^{15}yz^{16}t^{128} + x^3z^{29}t^{128} + x^{15}yz^{16}t^{96} \\&+ x^{14}y^{11}z^7t^{64} + x^3z^{29}t^{64},\, x^{32}t^{128} + x^3y^{26}z^3t^{128} + x^{23}y^2z^7t^{128} + x^{12}y^{13}z^7t^{128} \\&+ x^2y^{14}z^{16}t^{128} + y^{12}z^{20}t^{128} + x^2y^{14}z^{16}t^{96} + y^{12}z^{20}t^{96} + x^{23}y^2z^7t^{64}\\ &+ x^{12}y^{13}z^7t^{64} + y^{12}z^{20}t^{64} + y^{12}z^{20}t^{32} + x^{32},\, y^{32}t^{96} + x^{25}y^2z^5t^{96}\\& + x^3y^{24}z^5t^{96} + x^{24}yz^7t^{96} + x^{13}y^{12}z^7t^{96} + x^2y^{23}z^7t^{96} + x^{12}z^{20}t^{96} + y^{32}t^{64}\\& + x^{24}yz^7t^{64} + x^{13}y^{12}z^7t^{64} + x^2y^{23}z^7t^{64} + y^{32}t^{32} + x^{12}z^{20}t^{32} + y^{32})
\end{align*} of total degree $96$.

Likewise, one can show that $F^{9^\ast}\mathcal{S}$ is generically trivial on the relative Fermat curves over $\mathbb{F}_2[t]$ of degrees $171, 173, 175$ and $177$. And again, as in Corollary \ref{KoFtrivial} one sees that $\mathcal{S}$ is generically strongly semistable and stable but not strongly semistable on the special fiber.
\end{Bem}

\begin{Theo}
\label{p=2Theorem}
Consider the smooth projective curve \[X = \Proj k[x,y,z]/(x^{2^n + 2l + 1 } + y^{2^n + 2l + 1} + z^{2^n + 2l + 1})\] with $n \geq 2$ arbitrary and $0 \leq l \leq \lfloor\frac{2^{n-1}}{3} - \frac{1}{2}\rfloor$, where $k$ is an algebraically closed field with $\chara k = 2$. Denote by $K$ an algebraic closure of $k(t)$. Then the natural morphism \[h_{X}: \pi_1(X \times_k \Spec K, x) \to \pi_1(X, x) \times_k \Spec K\]  of affine group schemes over $K$ is not an isomorphism.
\end{Theo}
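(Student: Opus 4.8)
The plan is to argue by contradiction, playing the descent statement of Mehta and Subramanian (the Proposition recalled in the introduction) against Proposition \ref{NonConstantFamily}. First I would record the two hypotheses needed to invoke the former: $X$ is a smooth plane curve of odd degree $d = 2^n + 2l + 1 \geq 5$, hence of genus $\binom{d-1}{2} \geq 2$, and it carries a $k$-rational point $x$ because $k$ is algebraically closed. So suppose $h_X$ were an isomorphism. I would then take the family $\mathcal{S}$ over $X \times_k \Spec k[t]_{(t-1)}$ furnished by Corollary \ref{KoFtrivial}, write $\mathcal{S}_t$ for its generic fiber on $X_{k(t)}$, and set $\mathcal{E} := \mathcal{S}_t \otimes_{k(t)} K$ on $X_K$. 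By Corollary \ref{KoFtrivial} the bundle $\mathcal{S}_t$ is geometrically stable and trivialized by the $n$th Frobenius pull back; since geometric stability is preserved by base change and the Frobenius commutes with base change, $\mathcal{E}$ is a stable, $F$-trivial bundle on $X_K$. The Proposition of Mehta and Subramanian then supplies a bundle $\mathcal{F}$ on $X$ with $\mathcal{F} \otimes_k K \cong \mathcal{E}$.

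The crux is to push this isomorphism down from $K$ to $k(t)$. Writing $p : X_{k(t)} \to X$ for the base change morphism and $\mathcal{G} := p^\ast \mathcal{F} = \mathcal{F} \otimes_k k(t)$, I have $\mathcal{G} \otimes_{k(t)} K \cong \mathcal{E} = \mathcal{S}_t \otimes_{k(t)} K$. Since $\mathcal{E}$ is stable, any subsheaf of $\mathcal{G}$ violating stability would still violate it after the faithfully flat base change to $K$; hence $\mathcal{G}$ is stable, and $\deg \mathcal{G} = \deg \mathcal{E} = 0$ because the degree is invariant under base change, so $\mathcal{G}$ and $\mathcal{S}_t$ are both stable of slope $0$. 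Flat base change along $k(t) \hookrightarrow K$ gives
\[ \Hom_{X_{k(t)}}(\mathcal{G}, \mathcal{S}_t) \otimes_{k(t)} K \cong \Hom_{X_K}(\mathcal{E}, \mathcal{E}) = K, \]
the last equality since a stable bundle over a field is simple. Thus $\Hom_{X_{k(t)}}(\mathcal{G}, \mathcal{S}_t)$ is one-dimensional over $k(t)$, and a nonzero homomorphism between stable bundles of equal slope is an isomorphism; therefore $p^\ast \mathcal{F} = \mathcal{G} \cong \mathcal{S}_t$.

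Finally I would derive the contradiction from Proposition \ref{NonConstantFamily} applied to the discrete valuation ring $V := k[t]_{(t-1)}$, which contains $k$, has uniformizer $t-1$, residue field $k$ and quotient field $k(t)$. For the family $\mathcal{S}$ of Corollary \ref{KoFtrivial} the generic fiber $\mathcal{S}_t$ is locally free and strongly semistable (being stable and $F$-trivial) while the special fiber $\mathcal{S}_0 \cong \Syz(x^2, y^2, z^2)$ is locally free, stable and not strongly semistable, so hypotheses (i) and (ii) of Proposition \ref{NonConstantFamily} hold. That proposition asserts that no locally free $\mathcal{F}$ on $X$ can satisfy $p^\ast \mathcal{F} \cong \mathcal{S}_t$, directly contradicting the isomorphism just produced. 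Hence $h_X$ is not an isomorphism. The step I expect to be most delicate is the descent in the middle paragraph: one must transport an isomorphism that a priori only exists over the large extension $K$ back to $k(t)$, and this genuinely exploits that $\mathcal{S}_t$ is honestly (geometrically) stable --- hence simple --- rather than merely strongly semistable, so that the one-dimensional $\Hom$-space forces an isomorphism.
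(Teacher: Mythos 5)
Your proof is correct and follows essentially the same route as the paper, whose proof of Theorem \ref{p=2Theorem} is literally the one-line combination of Theorem \ref{TFTrivial} (via its specialization in Corollary \ref{KoFtrivial}), Proposition \ref{NonConstantFamily} with $V = k[t]_{(t-1)}$, and the Mehta--Subramanian descent proposition from the introduction. Your middle paragraph --- descending the isomorphism from $K = \overline{k(t)}$ back to $k(t)$ via flat base change of $\Hom$, simplicity of the stable bundle $\mathcal{E}$, and the fact that a nonzero map between stable bundles of equal slope is an isomorphism --- correctly fills in a step the paper leaves implicit, and is a welcome addition rather than a deviation.
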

\begin{proof}
This follows from combining Theorem \ref{TFTrivial}, Proposition \ref{NonConstantFamily} and \cite[Proposition 3.1]{methasubramanianfungroupscheme} (cf.\ Introduction).
\end{proof}

\begin{Bem}
For $X$ a Fermat curve as in Theorem \ref{p=2Theorem} above the \v{C}ech cohomology class \[\frac{z^{2^{n-1}  +l + 1}}{x^{2^{n-2}}y^{2^{n-2} +l+ 1}} \in H^1(X, \mathcal{O}_X)\] is nonzero but its first Frobenius pull back is \[\frac{z^{2^n + 2l+2 }}{x^{2^{n-1}}y^{2^{n-1} + 2l+2 }} = \frac{z x^{2^{n -1} + 2l + 1}}{y^{2^{n-1} + 2l+ 2}} + \frac{z y^{2^{n -1} -1}}{x^{2^{n-1}}} = 0.\]
Hence, the curves in Theorem \ref{p=2Theorem} are all non-ordinary.
\end{Bem}

\section{Examples for all prime characteristics}
In this section we provide examples of failure of base change of the fundamental group scheme for all prime characteristics.

\begin{Le}
\label{PSyzfuerallepgeq3}
Let $p \geq 2$ be a prime number and $k$ a field of characteristic $p$ and $l \in \mathbb{N}$ with $1 \leq l \leq \lfloor\frac{p}{2}\rfloor$. Consider the smooth projective relative curve \[Y = \Proj k[t][x,y,z]/(x^{3pl-1} + y^{3pl-1} + z^{3pl-1} + x^{pl} z^{2pl-1}) \longrightarrow \Spec k[t]\] and the twisted syzygy bundle $\mathcal{S}(3l) = \Syz(x^{2l}, y^{2l}, z^{2l} + t x^l z^l)(3l)$ on $Y$. Then $\mathcal{S}(3l)$ restricted to the generic fiber is stable and $F^\ast(\mathcal{S}(3l))$ is, over the generic fiber, a non-split extension of $\mathcal{O}_{Y_t}$ with itself. The extension is given (up to multiplication by an element of $k(t)^\times$) by the cohomology class \[c = \sum_{i=1}^{pl} \frac{y^{2pl}}{z^i x^{2pl -i}} (t^p-1)^{i-1}(t^p-t^{2p})^{i-1} + \sum_{i=1}^{pl-1} \frac{y^{2pl}}{z^{pl+i}x^{pl-i}} (t^p-1)^i (t^p-t^{2p})^{i-1}\] in $H^1(Y_t, \mathcal{O}_{Y_t})$ using \v{C}ech cohomology for the covering $D_+(x), D_+(z)$. Moreover, $\mathcal{S}(3l)_1$ is stable but not strongly semistable.
\end{Le}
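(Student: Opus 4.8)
The plan is to follow the blueprint established for the characteristic $2$ case in Theorem~\ref{TFTrivial} and Corollary~\ref{KoFtrivial}, adapting each ingredient to the general prime $p$ and to the specific curve $Y$ and syzygy bundle $\mathcal{S}(3l) = \Syz(x^{2l}, y^{2l}, z^{2l} + tx^l z^l)(3l)$. There are four things to verify: that the generic fiber is stable, that $F^\ast \mathcal{S}(3l)_t$ is a (non-split) self-extension of $\mathcal{O}_{Y_t}$ realized by the displayed class $c$, that $c$ is nonzero in $H^1(Y_t, \mathcal{O}_{Y_t})$, and finally that the special fiber $\mathcal{S}(3l)_1$ (obtained by setting $t = 1$, i.e.\ replacing the entry by $z^{2l} + x^l z^l$) is stable but not strongly semistable.

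For stability and semistability I would argue degree-theoretically with the numerical bounds of Brenner and Miyaoka exactly as in Theorem~\ref{TFTrivial}. The curve has degree $3pl - 1$, and one checks that the relevant Frobenius-periodicity thresholds place $\mathcal{S}(3l)$ in the semistable-but-not-strongly-semistable range on a suitable special fiber: concretely I would compute $\deg Y$, the twist $3l$, and the rank-one subsheaf destabilizing after a Frobenius pull back, then invoke \cite[Corollary 2]{brennermiyaoka} to rule out strong semistability and \cite[Proposition 6.2]{brennercomputationtight} to guarantee semistability. Combined with Lemma~\ref{SufficientStable} this gives (geometric) stability of the special fiber, and by the openness of geometric stability (\cite[Proposition 2.3.1]{huybrechtslehn}) the generic fiber is stable as well. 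The fact that strong semistability on one fiber propagates to the generic fiber — via \cite[Corollary 3.12 and the remark before Proposition 5.2]{miyaokachern} — is what lets me read off the behavior of $\mathcal{S}(3l)_t$.

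The heart of the lemma, and the step I expect to be the main obstacle, is the identification of the Frobenius pull back with the explicit \v{C}ech class $c$. Since $F^\ast \mathcal{S}(3l)_t$ has rank $2$ with trivial determinant and sits in an exact sequence with $\mathcal{O}_{Y_t}$ quotients, the extension it represents is classified by an element of $H^1(Y_t, \mathcal{O}_{Y_t})$, and I would produce this element by pulling back the defining syzygy sequence under $F$, choosing \v{C}ech representatives on the affine cover $D_+(x), D_+(z)$, and tracking the connecting map. The presence of the extra term $x^{pl} z^{2pl-1}$ in the curve equation and the mixed term $tx^l z^l$ inside the syzygy make this a genuinely delicate bookkeeping computation; the sums over $i$ with the coefficients $(t^p-1)^{i-1}(t^p - t^{2p})^{i-1}$ and $(t^p-1)^i(t^p-t^{2p})^{i-1}$ are precisely what the relations in the coordinate ring force, and I would verify term by term that the putative cocycle $c$ maps to the correct boundary. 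Once $c$ is in hand, nonvanishing in $H^1$ follows by exhibiting that no coboundary can cancel the leading monomials — equivalently that the class is not a sum of terms of the form $g|_{D_+(x)} - g|_{D_+(z)}$ — which one checks by reducing modulo the curve equation and inspecting the surviving monomial basis of $H^1(Y_t, \mathcal{O}_{Y_t})$. The non-splitting of the extension is then immediate, since a split extension would give the zero class.

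Finally, for the special fiber $t = 1$, I would note that setting $t = 1$ turns the syzygy entry into $z^{2l} + x^l z^l = z^l(z^l + x^l)$, so $\mathcal{S}(3l)_1 = \Syz(x^{2l}, y^{2l}, z^{2l} + x^l z^l)(3l)$, and the numerical argument from the stability step applies verbatim on this fiber to show it is stable but not strongly semistable. This is the ingredient that will feed into Proposition~\ref{NonConstantFamily} to produce the desired failure of base change, exactly as Corollary~\ref{KoFtrivial} does in characteristic $2$.
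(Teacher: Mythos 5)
Your overall scaffolding (special fiber stable but not strongly semistable, Lemma \ref{SufficientStable}, openness of geometric stability for the generic fiber, non-splitness by inspecting the monomial basis of $H^1$) matches the paper, but there is a genuine gap at the heart of the argument: you never produce the short exact sequence $0 \to \mathcal{O}_{Y_t} \to F^\ast(\mathcal{S}(3l)_t) \to \mathcal{O}_{Y_t} \to 0$ whose class you propose to compute. Pulling back the presenting sequence of the syzygy bundle under $F$ only yields $0 \to F^\ast\mathcal{S} \to \bigoplus_i \mathcal{O}(-2pl) \to \mathcal{O} \to 0$; no connecting map of that sequence exhibits $F^\ast(\mathcal{S}(3l)_t)$ as a self-extension of $\mathcal{O}_{Y_t}$. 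What is needed is a nowhere-vanishing global section, and the paper's proof hinges on writing one down explicitly: the syzygy $s_1 = \bigl((t^p-t^{2p})z^{pl} - x^{pl-1}z,\ -y^{pl-1}z,\ (t^p-1)x^{pl} - z^{pl}\bigr)$ of total degree $3pl$, checked to be zero-free (if $z=0$ the third component forces $x=0$ and the curve equation forces $y=0$; if $z \neq 0$ specialize $t=0$). Your "track the connecting map term by term" has no starting point without $s_1$. Moreover, the paper does not compute $\delta(1)$ directly: since the extension class spans $\ker\bigl(s_1\cdot\colon H^1(Y_t,\mathcal{O}_{Y_t}) \to H^1(Y_t, F^\ast(\mathcal{S}(3l)_t))\bigr)$, it suffices to verify that the displayed class $c$ is annihilated by multiplication with $s_1$ (a two-stage coboundary computation, first in $\mathcal{O}_{Y_t}(pl)^3$ and then, using the degree bound from Serre's vanishing-type argument, in $F^\ast(\mathcal{S}(3l)_t)$ itself) and that $c \neq 0$. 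This "characterize the class by its kernel property" device is the second idea missing from your plan, and it is what makes the otherwise forbidding \v{C}ech bookkeeping tractable.

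The stability step also does not transfer from Theorem \ref{TFTrivial} the way you propose. The citations \cite[Corollary 2]{brennermiyaoka} and \cite[Proposition 6.2]{brennercomputationtight} concern the monomial bundle $\Syz(x^2,y^2,z^2)$ on Fermat curves, whereas here the curve has the extra term $x^{pl}z^{2pl-1}$ and the bundle the non-monomial generator $z^{2l}+x^lz^l$; neither result applies. In fact the curve equation is engineered precisely so that $s = (x^{pl-1}, y^{pl-1}, z^{pl-1})$ is a syzygy of $F^\ast\mathcal{S}_1 = \Syz(x^{2pl}, y^{2pl}, z^{2pl}+x^{pl}z^{pl})$ of total degree $3pl-1$, which beats the slope $-3pl\deg\mathcal{O}_{Y_1}(1)$ and destabilizes; semistability of $\mathcal{S}(3l)_1$ is then quoted from \cite[Lemma 2.3]{brennerstaeblergeometricdeformations}, after which Lemma \ref{SufficientStable} and openness of geometric stability run as you say. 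Finally, your appeal to Miyaoka-style propagation of strong semistability to the generic fiber is misplaced in this lemma: no strong semistability (let alone $F$-triviality) of $\mathcal{S}(3l)_t$ is asserted here -- that is exactly what fails before passing to the \'etale cover in Theorem \ref{BundlesPgeq3} -- and only openness of stability is used. You do identify the special fiber $t=1$ correctly.
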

\begin{proof}
The syzygy bundle $F^\ast \mathcal{S}_1 = \Syz(x^{2pl}, y^{2pl}, z^{2pl} + x^{pl}z^{pl})$ admits the syzygy $s = (x^{pl-1}, y^{pl-1}, z^{pl-1})$ of total degree $3pl-1$. Hence, $s$ yields a destabilizing subbundle since the slope of $F^\ast \mathcal{S}_1$ is $-3pl \deg \mathcal{O}_{Y_1}(1)$. By \cite[Lemma 2.3]{brennerstaeblergeometricdeformations} and Lemma \ref{SufficientStable} we obtain that $\mathcal{S}(3l)_1$ is (geometrically) stable but not strongly semistable.
It follows by openness of geometric stability that $\mathcal{S}(3l)_t$ is geometrically stable.

Note that $F^\ast(\mathcal{S}(3l)_t) = \Syz(x^{2pl}, y^{2pl}, z^{2pl} + t^px^{pl}z^{pl})(3)_t$ has the section \[s_1 = ((t^p-t^{2p})z^{pl} - x^{pl-1}z, -y^{pl-1}z, (t^p-1)x^{pl} - z^{pl})\] without zeros. Indeed, if $z =0$ we immediately obtain that $x= y =0$. Conversely, if $z \neq 0$ then specialising $t=0$ we see that there are no points on the curve where the section vanishes.

The section $s_1$ thus yields a short exact sequence $0 \to \mathcal{O}_{Y_t} \xrightarrow{s_1} F^\ast (\mathcal{S}(3l)_t) \to \mathcal{O}_{Y_t} \to 0$. This extension is given by the cohomology class $c = \delta(1) \in H^1(Y_t, \mathcal{O}_{Y_t})$, where $\delta$ is the connecting homomorphism.

Since we only need to know $c$ up to multiplication by a unit in $k(t)$ it is enough to determine the kernel of $s_1: H^1(Y_t,\mathcal{O}_{Y_t}) \to H^1(Y_t,F^\ast(\mathcal{S}(3)_t))$. To fix notation we work with \v{C}ech cohomology using the cover $D_+(x), D_+(z)$ so that a basis of $H^1(Y_t,\mathcal{O}_{Y_t})$ is given by the $\frac{y^{a+b}}{x^az^b}$ where $a+b \leq 3pl -2$ and $a, b \geq 1$. Clearly, these elements are linearly independent. By the genus formula for smooth plane curves $\dim H^1(Y_t,\mathcal{O}_{Y_t}) = \frac{(3pl-2)(3pl -3)}{2}$ which is the amount of occuring elements\footnote{Let $b$ be in the range $1$ to $3pl-3$ then we have $3pl-3, 3pl - 2, \ldots, 1$ possibilities for $a$. Taking the sum over these yields $\sum_{i=1}^{3pl-3} i = \frac{(3pl -3)(3pl - 2)}{2}$}.

Claim 1: $c$ is mapped to zero along the composition \[H^1(Y_t, \mathcal{O}_{Y_t}) \longrightarrow H^1(Y_t, F^\ast(\mathcal{S}(3l)_t)) \longrightarrow H^1(Y_t, \mathcal{O}_{Y_t}(pl)^3)\] (the latter map is obtained from taking cohomology of the presenting sequence of $\mathcal{S}(3l)_t$). We have to show that $s_1 \cdot c$ is a coboundary, i.\,e.\ that there are tuples $g = (g_1, g_2, g_3), h = (h_1, h_2, h_3)$ such that $s_1 c  = \frac{g}{x^a} - \frac{h}{z^b}$ for suitable $a,b$. This is an easy calculation which will be left to the reader.

Claim 2: The cohomology class $s_1 c$ is already a coboundary in $\mathcal{S}$.  Denote $(x^{2pl}, y^{2pl}, z^{2pl} + t^p x^{pl} z^{pl})$ by $f$. For the claim to hold we have to verify that there are $\frac{g'}{x^{a'}}, \frac{h'}{z^{b'}}$ as above, such that in addition $\frac{g'}{x^{a'}} \cdot f =0$ and $\frac{h'}{z^{b'}} \cdot f = 0$. Since $s_1 c$ is a syzygy it suffices to show that $\frac{g}{x^a} \cdot f = P(x,y,z) y^{2pl}$, where $P$ is a polynomial in $x,y,z$. For then $\frac{g'}{x^a} := \frac{g}{x^a} - (0,P(x,y,z), 0)$ and $\frac{h'}{z^b} := \frac{h}{z^b} + (0, P(x,y,z), 0)$ will do. Note that $\frac{g}{x^a} \cdot f$ is a polynomial of degree $3pl$ in  $x,y,z$ (use \cite[Corollaire III.3.5]{SGA2}).
For the first and third component of $c s_1$ the variable $y$ occurs exactly with degree $2pl$. In the second component of $c s_1$ the variable occurs with degree $y^{3pl-1}$ so that after multiplication with $f_2$ and normalizing using the curve equation it also occurs exactly with degree $2pl$.

Finally, note that the vectors occuring in the linear combination that defines $c$ are all part of a basis of $H^1(Y_t, \mathcal{O}_{Y_t})$. So this extension does indeed not split.
\end{proof}

\begin{Bem}
Note that a similar example occurs with $l = 1$ in \cite[Remark 5.1]{kaidkasprowitz} in a different context.

One can also verify directly that for $p = 2$ (and $l =1$) the action of Frobenius on $H^1(Y_t, \mathcal{O}_{Y_t})$ is nilpotent. Hence, $\mathcal{S}(3)_t$ is $F$-trivial for $p = 2$.
\end{Bem}

Recall that Frobenius acts on $H^1(Y_t, \mathcal{O}_{Y_t})$ and that for a vector space $V$ over a perfect field $k$ with Frobenius action one has the so-called Fitting decomposition $V = V_s \oplus V_n$, where $F$ acts nilpotently on $V_n$ and bijectively on $V_s$ (cf.\ \cite[XXII]{SGA7II} or \cite[III.6]{hartshorneamplesubvarieties}). In particular, there exists a basis of $V_s$ which is (pointwise) fixed by $F$ (cf. \cite[Proposition XXII.1.1]{SGA7II}).

In what follows, we will pass to an \'etale cover of the curve defined in Lemma \ref{PSyzfuerallepgeq3} such that the bundle generically becomes $F$-trivial (i.\,e.\ we will annihilate the semisimple part of the extension class $c$ by an Artin-Schreier extension). In order to ensure that the bundle is still generically stable after this \'etale pull back we need that the nilpotent part of $c$ is non-trivial. This is the claim of the next lemma.

\begin{Le}
\label{Lnichtreinetaletrivialisierbar}
Let $Y' = \Proj K[x,y,z]/(x^{3pl-1} + y^{3pl-1} + z^{3pl-1} + x^{pl} z^{2pl-1})$, where $K$ is a perfect closure of $k(t)$, $k$ a field of characteristic $p \geq 3$ and  $1 \leq l < \frac{p}{2}$ an integer. Denote by $c$ the \v{C}ech-cohomology class \[\sum_{i=1}^{pl} \frac{y^{2pl}}{z^i x^{2pl -i}} (t^p-1)^{i-1}(t^p-t^{2p})^{i-1} + \sum_{i=1}^{pl-1} \frac{y^{2pl}}{z^{pl+i}x^{pl-i}} (t^p-1)^i (t^p-t^{2p})^{i-1}\] in $H^1(Y', \mathcal{O}_{Y'})$
using the open cover $D_+(x), D_+(z)$.
Then the projection of $c$ to $H^1(Y', \mathcal{O}_{Y'})_n$ is nontrivial. In particular, $Y'$ is not ordinary.
\end{Le}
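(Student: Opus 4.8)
The plan is to show that $c$ does not lie in the semisimple part $V_s$ of $V := H^1(Y', \mathcal{O}_{Y'})$. Writing $c = c_s + c_n$ with $c_s \in V_s$ and $c_n \in V_n$, and noting that $F := F^\ast$ (the $p$-linear endomorphism of $V$ induced by the absolute Frobenius) restricts to a bijection on $V_s$, we have $V_s = F(V_s) \subseteq \im F$; hence it suffices to prove $c \notin \im F$, as this forces $c_n \neq 0$. Recall from the proof of Lemma \ref{PSyzfuerallepgeq3} that, writing $d := 3pl - 1$ for the degree of the defining equation, a $K$-basis of $V$ in the cover $D_+(x), D_+(z)$ is given by the classes $\frac{y^{a+b}}{x^a z^b}$ with $a, b \geq 1$ and $a + b \leq d - 1$. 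I grade $V = \bigoplus_m V^{(m)}$ by the $y$-degree, so $V^{(m)}$ is spanned by those basis classes with $a + b = m$. Every summand of $c$ has numerator $y^{2pl}$, so $c \in V^{(2pl)}$.

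The key point is to understand how $F$ interacts with this grading. On \v{C}ech cocycles $F$ is raising to the $p$-th power, so $F\bigl(\frac{y^{a+b}}{x^a z^b}\bigr) = \frac{y^{p(a+b)}}{x^{pa} z^{pb}}$, which one rewrites in the basis by repeatedly applying the relation $y^d = -(x^d + z^d + x^{pl} z^{2pl-1})$ coming from the curve. Each substitution lowers the $y$-degree by exactly $d$ and is otherwise homogeneous in $y$, so all terminal basis terms share one common $y$-degree; consequently $F(V^{(m)}) \subseteq V^{(m')}$, where $m'$ is the representative of $pm$ modulo $d$ in the admissible range $[2, d-1]$. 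I then determine for which source degrees $m$ one has $m' = 2pl$. Since $0 < 2pl < d$, this forces $pm = 2pl + jd$ for some $j \geq 0$; solving gives $m = 2l + 3jl - j/p$, so integrality of $m$ requires $p \mid j$, and the smallest positive such value $j = p$ already yields $m = 3pl + 2l - 1 > d - 1$, outside the admissible range. Hence only $j = 0$, i.e.\ $m = 2l$, contributes, and there no reduction is needed: $F\bigl(\frac{y^{2l}}{x^a z^b}\bigr) = \frac{y^{2pl}}{x^{pa} z^{pb}}$. Projecting $\im F$ onto $V^{(2pl)}$ therefore yields exactly $F(V^{(2l)})$, which is spanned by those basis classes of $V^{(2pl)}$ whose two denominator exponents are both divisible by $p$.

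To finish, I use that $c$ lives in $V^{(2pl)}$: if $c$ were in $\im F$, then $c = \mathrm{proj}_{V^{(2pl)}}(c)$ would lie in $F(V^{(2l)})$. But the $i = 1$ summand of the first sum defining $c$ is the basis class $\frac{y^{2pl}}{z\, x^{2pl - 1}}$ with coefficient $(t^p - 1)^0 (t^p - t^{2p})^0 = 1$, and its denominator exponent in $z$ equals $1$, which is not divisible by $p$ since $p \geq 3$. Thus $c$ has a nonzero coefficient on a basis class outside $F(V^{(2l)})$, so $c \notin \im F$ and hence $c_n \neq 0$, as desired. The main obstacle is the degree bookkeeping of the second paragraph: one must verify that $F$ preserves the $y$-grading in the clean form claimed, so that reductions of higher source degrees cannot sneak a contribution into $V^{(2pl)}$, and that the integrality-plus-range constraint rules out all source degrees except $m = 2l$; both rely on $p \geq 3$ and on $2pl$ lying safely below $d = 3pl - 1$.
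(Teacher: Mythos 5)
Your proposal is correct, and while it rests on the same two pillars as the paper's proof --- the Fitting decomposition together with the observation that Frobenius images, expanded in the monomial basis $\frac{y^{a+b}}{x^a z^b}$, only involve denominator exponents divisible by $p$ --- the route is genuinely different. The paper chooses $n$ with $p^n = s(3pl-1)+1$ so that $F^n$ preserves the $y$-degree grading, uses $V_s = F^n(V)$ for $n \gg 0$, and explicitly expands $F^n\bigl(\frac{y^{2pl}}{x^a z^b}\bigr)$; moreover it first reduces to the $t$-degree-zero part $\frac{y^{2pl}}{zx^{2pl-1}} - \frac{y^{2pl}}{z^{pl+1}x^{pl-1}}$ of $c$, a step which uses the triviality of the family so that the Fitting decomposition is defined over $k$ and the $t$-homogeneous components of an element of $V_s$ again lie in $V_s$. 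You instead use only $V_s = F(V_s) \subseteq \im F$ and analyze a single application of Frobenius: $F$ maps $V^{(m)}$ into $V^{(pm \bmod d)}$ (with $V^{(0)} = V^{(1)} = 0$; the terms acquiring a nonpositive exponent during reduction are sections over $D_+(z)$ or $D_+(x)$, hence coboundaries --- worth stating explicitly), and your integrality-plus-range argument is sound: since $\gcd(p,d)=1$, the congruence $pm \equiv 2pl \pmod d$ forces $m \equiv 2l \pmod d$ and hence $m = 2l$ in the admissible range, where $2pl < d$ means no curve-equation reduction occurs at all, so the $V^{(2pl)}$-component of anything in $\im F$ lies in the span of the classes $\frac{y^{2pl}}{x^{pa}z^{pb}}$. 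Reading off the coefficient $1$ of $\frac{y^{2pl}}{zx^{2pl-1}}$ in $c$ (the $i=1$ term; all monomials occurring in $c$ are pairwise distinct basis elements) then concludes. Your version buys three things: it proves the a priori stronger statement $c \notin \im F$; it avoids both the congruence $p^n \equiv 1 \pmod{3pl-1}$ and the explicit expansion of $F^n$; and it makes no use of the triviality of the family or the $k$-rationality of the Fitting decomposition, needing only perfectness of $K$ for the decomposition itself and the bijectivity of $F$ on $V_s$ (note that the containment $F(V^{(2l)}) \subseteq \mathrm{span}\{\frac{y^{2pl}}{x^{pa}z^{pb}}\}$ uses only $p$-linearity, so perfectness is not needed there). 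What the paper's grading-preserving $F^n$ buys in exchange is that $V_s$ itself becomes graded by $y$-degree, which is conceptually tidy and makes its reduction to the two-term $t$-constant part of $c$ painless.
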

\begin{proof}
Note that $Y'$ is obtained from the generic fiber of the relative curve $Y$ in Lemma \ref{PSyzfuerallepgeq3} by base change. Since the family in Lemma \ref{PSyzfuerallepgeq3} is trivial we have $H^1(Y', \mathcal{O}_{Y'}) = H^1(Y_0, \mathcal{O}_{Y_0}) \otimes K$. Passing to a perfect closure of $k$ and base changing we may assume that $k$ is perfect\footnote{Actually, we could also work over $\mathbb{F}_p$ and base change to $k$ and $K$.}. We therefore find a Fitting decomposition of $H^1(Y_0, \mathcal{O}_{Y_0})$. By base change to $K$ we obtain a Fitting decomposition of $H^1(Y',\mathcal{O}_{Y'})$. The point is that we therefore find bases of $H^1(Y',\mathcal{O}_{Y'})_s$ and of $H^1(Y', \mathcal{O}_{Y'})_n$ whose elements are $k$-linear combinations of the $\frac{y^{a+b}}{z^{a}x^{b}}$. Denote now $H^1(Y', \mathcal{O}_{Y'})$ by $V$ and recall that $V_s = F^m(V)$ for $m \gg 0$.

It is enough to show that the degree zero part of $c$ (considered as a polynomial in $t$), namely $\frac{y^{2pl}}{zx^{2pl-1}} - \frac{y^{2pl}}{z^{pl+1}x^{pl-1}}$, is not contained in $V_s$.
Indeed, if $c$ is contained in $V_s$ then we can write $c = \sum_{b\in B} \alpha_b b$, where $B$ is a basis of $V_s$ as above. As $c$ is a polynomial in $t$ the $\alpha_b$ are in $k[t]$. Hence, every homogeneous component (with respect to the grading induced by $t$) has to be contained in $V_s$.

Since $(p, 3pl -1) =1$ we may write $p^n = s(3pl-1) + 1$ for suitable $n, s \in \mathbb{N}$. Hence, a basis element $\frac{y^{a+b}}{z^ax^b}$ is mapped via $F^n$ to a sum of elements whose numerators are again $y^{a+b}$. Replacing $n$ by a sufficiently large multiple of $n$ we may assume that $V_s = F^n(V)$. Now one computes for $a+b = 2pl$ that \[F^n\left(\frac{y^{2pl}}{x^a z^b}\right) = \frac{y^{2pl} (x^{(3pl -1)p} + z^{(3pl-1)p} + x^{p^2l} z^{(2pl-1)p})^{2ls}}{x^{ap^n}z^{bp^n}},
\] and this is a sum of elements of the form $\frac{y^{2pl}}{x^{a'p}z^{b'p}}$ where $a' + b' = 2l$. In particular, the exponents of the denominators in the image are divisible by $p$ while this is not the case for the degree zero part of $c$. Hence, the degree zero part is not contained in $V_s$ and we conclude that it has some non-trivial nilpotent part.
\end{proof}

\begin{Theo}
\label{BundlesPgeq3}
Let $k$ be an algebraically closed field of characteristic $p \geq 2$, $1 \leq l \leq \lfloor\frac{p}{2}\rfloor$ an integer and $K$ an algebraic closure of $k(t)$.
Then there is a non-ordinary \'etale cover \[\varphi: X \longrightarrow  Y = \Proj k[t]_{(t-1)}[x,y,z,]/(x^{3pl-1} + y^{3pl-1} + z^{3pl-1} + x^{pl} z^{2lp-1})\] such that $\varphi^\ast \mathcal{S}(3) = \varphi^\ast(\Syz(x^2, y^2, z^2 + t x z)(3))$ on $X_t \times_{k(t)} K$ is stable and $F$-trivial and stable but not strongly semistable on $X_1$.
\end{Theo}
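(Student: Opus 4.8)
The plan is to build on the analysis of $\mathcal{S}$ in Lemmas \ref{PSyzfuerallepgeq3} and \ref{Lnichtreinetaletrivialisierbar}. Over the generic fibre $\mathcal{S}_t$ is geometrically stable and $F^\ast\mathcal{S}_t$ sits in a non-split self-extension $0 \to \mathcal{O}_{Y_t} \to F^\ast\mathcal{S}_t \to \mathcal{O}_{Y_t} \to 0$ classified by $c \in H^1(Y_t,\mathcal{O}_{Y_t})$, while $\mathcal{S}_1$ is stable but not strongly semistable. Since absolute Frobenius sends such a self-extension to the one with class $F^\ast c$, one has $F^{(m+1)\ast}\mathcal{S}_t \cong \mathcal{O}^2$ exactly when $F^{m\ast}c = 0$, so the sole obstruction to $F$-triviality is the semisimple part $c_s$ of $c$ in the Fitting decomposition $H^1 = V_s \oplus V_n$. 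The idea is to produce an étale cover $\varphi$ with $\varphi^\ast c_s = 0$; the nilpotent part (nontrivial by Lemma \ref{Lnichtreinetaletrivialisierbar}) survives but is annihilated by a further Frobenius power, so $\varphi^\ast\mathcal{S}_t$ becomes $F$-trivial. For $p = 2$ Frobenius is already nilpotent, $V_s = 0$, and one may take $\varphi = \id$.

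First I would construct the cover over the constant curve. Since the equation of $Y$ does not involve $t$, the family is trivial, $Y = Y_0 \times_k \Spec k[t]_{(t-1)}$, and by the proof of Lemma \ref{Lnichtreinetaletrivialisierbar} the Fitting decomposition is already defined over $Y_0$; recalling that $V_s$ admits a basis of Frobenius-fixed classes, I choose such a basis $e_1, \dots, e_r \in H^1(Y_0, \mathcal{O}_{Y_0})$. The Artin--Schreier sequence $0 \to \mathbb{Z}/p \to \mathcal{O}_{Y_0} \xrightarrow{F-\id} \mathcal{O}_{Y_0} \to 0$ shows each fixed class $e_i$ lifts to $H^1_{\text{\'et}}(Y_0, \mathbb{Z}/p)$, hence comes from a $\mathbb{Z}/p$-torsor $\pi_i : Z_i \to Y_0$ on which $e_i$ pulls back to zero. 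Letting $\varphi_0 : X_0 \to Y_0$ be a connected component of $Z_1 \times_{Y_0} \cdots \times_{Y_0} Z_r$ gives a finite étale cover with $\varphi_0^\ast V_s = 0$, and its pullback $\varphi : X = X_0 \times_k \Spec k[t]_{(t-1)} \to Y$ is the required cover.

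Next I would verify $F$-triviality. As $\varphi$ is étale, absolute Frobenius commutes with $\varphi^\ast$, so $F^\ast\varphi_t^\ast\mathcal{S}_t = \varphi_t^\ast F^\ast\mathcal{S}_t$ is the self-extension of $\mathcal{O}$ with class $\varphi_t^\ast c$. Writing $c = c_s + c_n$, the construction gives $\varphi_t^\ast c_s = 0$, while $\varphi_t^\ast c_n$ lies in the nilpotent part by Frobenius-equivariance of $\varphi_t^\ast$; hence $\varphi_t^\ast c$ is nilpotent. Choosing $m$ with $F^{m\ast}\varphi_t^\ast c = 0$ yields $F^{(m+1)\ast}\varphi_t^\ast\mathcal{S}_t \cong \mathcal{O}^2$ over $K$, that is, $\varphi^\ast\mathcal{S}$ is $F$-trivial on $X_t \times_{k(t)} K$.

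Finally I would treat stability and the special fibre. On $X_1 = X_0$ the bundle $\varphi_1^\ast\mathcal{S}_1$ stays semistable (pullback along the finite separable $\varphi_1$ preserves semistability) and stays non-strongly-semistable (a subsheaf destabilising some $F^{e\ast}\mathcal{S}_1$ pulls back to one destabilising $F^{e\ast}\varphi_1^\ast\mathcal{S}_1$), so Lemma \ref{SufficientStable} makes it geometrically stable. Then $\varphi^\ast\mathcal{S}$ is a flat family over the local base whose closed fibre is geometrically stable, and openness of geometric stability, as in Lemma \ref{PSyzfuerallepgeq3}, forces the generic fibre to be geometrically stable, hence stable over $K$. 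I expect the main difficulty to lie in the cover construction of the second paragraph: it rests on identifying the Frobenius-bijective part of $H^1(Y_0,\mathcal{O})$ as precisely the part trivialised by étale covers (the Artin--Schreier/Lange--Stuhler correspondence) and on the fact that this cover, defined over the constant curve $Y_0$, spreads out over the whole family so as to control both fibres at once.
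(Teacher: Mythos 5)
Your proposal is correct, and on the two main steps it coincides with the paper's proof: the \'etale cover is built exactly as in the paper (Artin--Schreier torsors killing a Frobenius-fixed basis of $H^1(Y_0,\mathcal{O}_{Y_0})_s$, spread out over the constant family by base change), and $F$-triviality of $\varphi^\ast\mathcal{S}(3)_t$ follows in both arguments from $\varphi_t^\ast c_s = 0$ together with Frobenius-equivariance of $\varphi^\ast$ annihilating the nilpotent remainder; your treatment of $p=2$ ($\varphi = \id$) also matches, though note that $V_s = 0$ there is not automatic but a direct computation, recorded in the remark after Lemma \ref{PSyzfuerallepgeq3}. The genuine divergence is the stability of $\varphi^\ast\mathcal{S}(3)_t$ on the generic fibre. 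The paper argues intrinsically: by Balaji--Parameswaran the \'etale pullback of the stable bundle $\mathcal{S}(3)_t$ is polystable; if it split as $\mathcal{L} \oplus \mathcal{L}^\vee$, the global section of the first Frobenius pullback would force $\mathcal{L}$ to be $p$-torsion, making $F^\ast\varphi^\ast\mathcal{S}(3)_t$ trivial, which is excluded by Lemma \ref{Lnichtreinetaletrivialisierbar} combined with Lange--Stuhler's characterization of \'etale trivializable bundles. You instead first prove stability of $\varphi_1^\ast\mathcal{S}(3)_1$ on the special fibre (exactly as the paper's final paragraph does: separable pullback preserves semistability, a destabilizing subsheaf of $F^{e\ast}\mathcal{S}(3)_1$ pulls back, then Lemma \ref{SufficientStable}) and propagate it to the generic fibre by openness of geometric stability over $\Spec k[t]_{(t-1)}$ --- the same device the paper itself uses inside Lemma \ref{PSyzfuerallepgeq3}, and legitimate here since $\varphi^\ast\mathcal{S}(3)$ is a flat family on the projective $X \to \Spec k[t]_{(t-1)}$ and the only open subset of a DVR spectrum containing the closed point is the whole space. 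Your route is shorter and, notably, renders Lemma \ref{Lnichtreinetaletrivialisierbar} superfluous for this theorem (you invoke it only for the nontriviality of $c_n$, which your argument never actually uses); what the paper's route buys is extra intrinsic information about the generic fibre --- in particular that $\varphi^\ast\mathcal{S}(3)_t$ is not \'etale trivializable --- which feeds the surrounding discussion such as Example \ref{Ftrivialnotopen}, where the necessity of the cover is analyzed. Both arguments are sound; yours is an acceptable, somewhat more economical alternative.
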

\begin{proof}
For $p =2$ we can take $X = Y$ since $\mathcal{S}_t$ is then actually $F$-trivial. So let $p \geq 3$.
Note that we have $H^1(Y_0, \mathcal{O}_{Y_0}) \otimes k(t) = H^1(Y_t, \mathcal{O}_{Y_t})$ since the family is trivial. The vector space $H^1(Y_0, \mathcal{O}_{Y_0})_s$ admits a basis $B$ whose elements are fixed by Frobenius. By Artin-Schreier theory (cf.\ e.\,g.\ \cite[III \S 4]{milne}) there is an \'etale cover $\varphi: X_0 \to Y_0$ such that $\varphi^\ast(b) = 0$ for all $b \in B$. We denote this morphism by $\varphi_0: X_0 \to Y_0$. Via base change (\cite[Proposition I.3.3 (c)]{milne}) we obtain an \'etale morphism $\varphi: X = X_0 \times \Spec k[t]_{(t-1)} \to Y$.

Denote by $c \in H^1(Y_t, \mathcal{O}_{Y_t})$ the cohomology class described in Lemma \ref{PSyzfuerallepgeq3}. We can write $c = c_s + c_n$, where $c_n \in H^1(Y_t, \mathcal{O}_{Y_t})_n$ and $c_s \in H^1(Y_t, \mathcal{O}_{Y_t})_s$. We then obtain $\varphi_t^\ast(c_s) =0$. In particular, $\varphi^\ast \mathcal{S}(3)$ is generically $F$-trivial.


Lemma \ref{PSyzfuerallepgeq3} yields that $\mathcal{S}(3)_t$ is stable. Hence, by \cite[proof of Proposition 6.8.\ (ii)]{balajiparameswarannarasimhanseshadrianalogue} $\varphi^\ast (\mathcal{S}(3)_t)$ is polystable, i.\,e.\ a direct sum of stable bundles. We want to prove that it is in fact stable.

First of all, note that the class $\varphi^\ast(c_n)$ is non-zero. For otherwise the $F$-trivial bundle on $Y$ defined by $c_n$ would be \'etale trivializable. This is impossible by \cite[Bemerkung 1.7]{langestuhler}. In particular, it follows that $F^\ast \varphi^\ast \mathcal{S}(3)_t$ is a non-split extension defined by $\varphi^\ast(c_n)$. This also shows that $X$ is non-ordinary. 

Assume now that $\varphi^\ast (\mathcal{S}(3)_t)$ is not stable, hence a direct sum of two line bundles $\mathcal{L}$ and $\mathcal{G}$. Since the determinant of $\mathcal{S}(3)_t$ is trivial we must have $\mathcal{G} = \mathcal{L}^\vee$. As its first Frobenius pull back admits a global section $\mathcal{L}$ has to be $p$-torsion. But at the same time $F^\ast \varphi^\ast \mathcal{S}(3)_t$ is the extension of $\mathcal{O}_X$ with itself defined by the non-trivial class $\varphi^\ast(c_n)$. This contradiction shows that $\varphi^\ast \mathcal{S}(3)_t$ has to be stable.

Since $\varphi$ is separable the pull back of $\mathcal{S}(3)_1$ to $X$ is still semistable. As $\varphi^\ast \mathcal{S}(3)_1$ is also not strongly semistable it is stable by Lemma \ref{SufficientStable}. 
\end{proof}

\begin{Bsp}
\label{Ftrivialnotopen}
Consider the case $p = 3$ and $l =1$. Then $p^2 = (3p-1) + 1$ so that $s = 1$. One can show that the degree zero part of $c$ is nilpotent in this case. However, the term of highest degree of $c$ with respect to $t$ is $\frac{y^{6}}{x^3 z^3}$ and $F^2(\frac{y^{6}}{x^3 z^3}) =  - \frac{y^6}{x^3z^3}$. Hence, $c$ is not $F$-nilpotent and the passage to an \'etale cover is necessary in this case.

It is also worth noting that in this case $\mathcal{S}(3)$ over $X \times \Spec k[t]$ is $F$-trivial over the special fiber $t =0$. Indeed, it is easily verified with a computer algebra system (e.\,g.\ \cite{CocoaSystem}) that $F^{2^\ast} \Syz(x^2, y^2, z^2)(3)$ over $X$ has two linearly independent global sections without zeros.
Namely,
\begin{align*}
s_1 &=(x^6z^3, y^6z^3, xy^8 - x^4z^5 + xz^8 + z^9),\\
s_2 &=(x^7y^2 - x^3y^2z^4 - y^2z^7, xy^8 + x^5z^4 + x^2z^7 - z^9,\\& x^6y^2z + x^5y^2z^2 + x^3y^2z^4 - x^2y^2z^5 + xy^2z^6 + y^2z^7).
\end{align*}
 Thus the $F$-triviality follows from Lemma \ref{LTrivial}.
\end{Bsp}

\begin{Ko}
Let $X_1$ and $k$ be as above. Denote by $K$ an algebraic closure of $k(t)$. Then the natural morphism \[h_{X_1}: \pi_1(X_1 \times_k \Spec K, x) \to \pi_1(X_1, x) \times_k \Spec K\]  of affine group schemes over $K$ is not an isomorphism.
\end{Ko}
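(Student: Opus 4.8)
The plan is to derive the statement from the contrapositive of the Mehta--Subramanian Proposition quoted in the Introduction, applied to the curve $X_1$: if $h_{X_1}$ were an isomorphism, then every stable, $F$-trivial bundle on $X_1 \times_k K$ would descend to a bundle on $X_1$, and I would exhibit one that does not. The bundle to use is the one built in Theorem~\ref{BundlesPgeq3}. Because the family $X = X_0 \times_k \Spec k[t]_{(t-1)}$ is trivial, its generic fibre $X_t$ and its special fibre $X_1$ are both isomorphic to $X_0$, whence $X_t \times_{k(t)} K \cong X_1 \times_k K$; under this identification $\mathcal{E} := \varphi^\ast \mathcal{S}(3)_t \otimes_{k(t)} K$ is, by Theorem~\ref{BundlesPgeq3}, a stable and $F$-trivial bundle on $X_1 \times_k K$. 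First I would record that $X_1$ satisfies the hypotheses of the Mehta--Subramanian Proposition: it is a smooth projective connected curve over the algebraically closed field $k$, it carries a $k$-rational point, and its genus is $\geq 2$, since it is (or is a connected \'etale cover of, which only raises the genus) a smooth plane curve of degree $3pl-1 \geq 5$ and hence of genus $\geq 6$.

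The heart of the matter is to show that $\mathcal{E}$ does not descend to $X_1$. So suppose $h_{X_1}$ is an isomorphism; then there is a bundle $\mathcal{F}$ on $X_1$ with $\mathcal{F} \otimes_k K \cong \mathcal{E}$. The obstruction to such a descent is encoded in the non-trivial one-parameter family $\varphi^\ast \mathcal{S}(3)$ on $X = X_0 \times_k \Spec k[t]_{(t-1)}$: by Theorem~\ref{BundlesPgeq3} its restriction to the generic fibre is strongly semistable (it is $F$-trivial), while its restriction to the special fibre $X_1$ is stable but not strongly semistable. Thus Proposition~\ref{NonConstantFamily}, applied to the discrete valuation ring $k[t]_{(t-1)}$, already forbids $\varphi^\ast \mathcal{S}(3)_t$ from stemming from the special fibre over the field $k(t)$.

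To convert the descent assumed over $K$ into a contradiction with Proposition~\ref{NonConstantFamily}, I would spread out. The isomorphism $\mathcal{F} \otimes_k K \cong \varphi^\ast \mathcal{S}(3)_t \otimes_{k(t)} K$ is given by finitely many entries, so it is already defined over a finite subextension $L$ of $K = \overline{k(t)}$ over $k(t)$; that is, $\mathcal{F} \otimes_k L \cong \varphi^\ast \mathcal{S}(3)_t \otimes_{k(t)} L$. Let $W$ be the integral closure of $k[t]_{(t-1)}$ in $L$; by Krull--Akizuki $W$ is Dedekind, and localising at a maximal ideal over $(t-1)$ produces a discrete valuation ring $V_L$ with quotient field $L$, containing $k$, whose residue field is a finite, hence trivial, extension of $k$. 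Base changing the family along $k[t]_{(t-1)} \hookrightarrow V_L$ gives on the generic fibre $\varphi^\ast \mathcal{S}(3)_t \otimes_{k(t)} L$, which is strongly semistable (strong semistability descends from $K$ to $L$), and on the special fibre $\varphi^\ast \mathcal{S}(3)_1$, which is stable but not strongly semistable. Proposition~\ref{NonConstantFamily} then says that $\varphi^\ast \mathcal{S}(3)_t \otimes_{k(t)} L$ does not stem from $X_1$, contradicting the isomorphism $\mathcal{F} \otimes_k L \cong \varphi^\ast \mathcal{S}(3)_t \otimes_{k(t)} L$. Hence $\mathcal{E}$ does not descend, and $h_{X_1}$ is not an isomorphism.

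The only step that is not purely formal is this passage from the algebraically closed field $K = \overline{k(t)}$ demanded by the Mehta--Subramanian Proposition down to a finite level $L$, where a discrete valuation ring with residue field $k$ is available and Proposition~\ref{NonConstantFamily} (which rests on Langton's theorem for discrete valuation rings) can be invoked. I expect this spreading-out, together with the accompanying bookkeeping for possibly inseparable extensions, rather than any new geometric input, to be the main obstacle; it is exactly the mechanism already used in the proof of Theorem~\ref{p=2Theorem}.
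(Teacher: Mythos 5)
Your proposal is correct and takes essentially the same route as the paper, whose proof of this corollary is literally the combination of Theorem~\ref{BundlesPgeq3}, Proposition~\ref{NonConstantFamily} and \cite[Proposition 3.1]{methasubramanianfungroupscheme} that you carry out. Your spreading-out step -- descending the isomorphism from $K=\overline{k(t)}$ to a finite subextension $L$ and producing via Krull--Akizuki a discrete valuation ring with quotient field $L$ and residue field $k$ -- correctly fills in the detail the paper leaves implicit, and it is genuinely needed, since no discrete valuation ring can have an algebraically closed fraction field, so Proposition~\ref{NonConstantFamily} cannot be applied with $K=\overline{k(t)}$ directly.
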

\begin{proof}
Combine Theorem \ref{BundlesPgeq3}, Proposition \ref{NonConstantFamily} and \cite[Proposition 3.1]{methasubramanianfungroupscheme}.
\end{proof}

\section{$F$-trivial bundles in families}
In this last section we investigate in a more general context how $F$-trivial bundles behave in (not necessarily trivial) families.

The following basic proposition describes the behavior of $F$-triviality in families and is based on the usual argument for trivial bundles in a family.

\begin{Prop}
\label{Ftrivialfamilies}
Let $\pi: X \to Y$ be a projective morphism of noetherian schemes, where $Y$ is integral and defined over an algebraically closed field $k$ of positive characteristic $p > 0$. Assume that $\mathcal{S}$ is a coherent sheaf on $X$, flat over $Y$.

If $\mathcal{S}$ restricted to the generic fiber is trivialised by the $e$th Frobenius pull back then there is a non-empty open subset $U$ of $Y$ such that $\mathcal{S}_t$ is $F$-trivial for all $t$ in $U$.
Conversely, if there is $e \geq 0$ such that $F^{e^\ast}\mathcal{S}_u$ is trivial for all $u$ in a dense subset $U$ of $Y$ then $\mathcal{S}$ restricted to the generic fiber $\eta$ is trivialised by $F^{e}$.
\end{Prop}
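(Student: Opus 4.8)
The plan is to remove the Frobenius from the problem by passing to a pullback and then to argue as one does for ordinary trivial bundles in a family. Write $F = F_X$ for the absolute Frobenius of $X$ and set $\mathcal{T} = (F^e)^\ast \mathcal{S}$, a coherent sheaf on $X$. The first thing I would record is that the $e$th absolute Frobenius pullback commutes with restriction to fibres: since $F^e$ is the identity on the underlying topological space and raises sections of $\mathcal{O}_X$ to their $p^e$th powers, for every $t \in Y$ the composite $F^e \circ j_t$ agrees with $j_t \circ F_{X_t}^e$ as morphisms $X_t \to X$, where $j_t \colon X_t \hookrightarrow X$ is the inclusion of the fibre and $F_{X_t}$ the absolute Frobenius of $X_t$ (this uses that restriction along $j_t$ is a ring homomorphism and hence commutes with $p^e$th powers, including on the residue field $\kappa(t)$). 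Consequently $j_t^\ast \mathcal{T} = (F_{X_t}^e)^\ast \mathcal{S}_t$, so that ``$\mathcal{S}_t$ is trivialised by $F^e$'' is literally the statement that $\mathcal{T}$ restricted to $X_t$ is free of rank $r = \rk \mathcal{S}$. This reduces both assertions to a question about the single coherent sheaf $\mathcal{T}$ and its fibres.

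For the forward assertion I would spread out the trivialisation. By hypothesis $\mathcal{T}_\eta = (F_{X_\eta}^e)^\ast \mathcal{S}_\eta \cong \mathcal{O}_{X_\eta}^r$; fix an isomorphism $\phi_\eta \colon \mathcal{O}^r \to \mathcal{T}_\eta$ and its inverse $\psi_\eta$. Since $X_\eta$ is the cofiltered limit of the $X_U = \pi^{-1}(U)$ over the nonempty opens $U \subseteq Y$ and $\mathcal{T}$ is coherent, the identity $\Hom_{X_\eta}(\mathcal{O}^r, \mathcal{T}_\eta) = \varinjlim_U \Hom_{X_U}(\mathcal{O}^r, \mathcal{T}|_{X_U})$ lets me extend $\phi_\eta$ and $\psi_\eta$ to homomorphisms $\phi_U, \psi_U$ over some $X_U$. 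The loci in $X_U$ where $\phi_U \psi_U$ and $\psi_U \phi_U$ differ from the identity are closed and disjoint from $X_\eta$; their images under the proper morphism $X_U \to U$ are closed proper subsets of $U$, so after shrinking $U$ the maps $\phi_U, \psi_U$ become mutually inverse and $\mathcal{T}|_{X_U} \cong \mathcal{O}_{X_U}^r$. Restricting to any fibre $X_t$ with $t \in U$ gives $\mathcal{T}_t \cong \mathcal{O}_{X_t}^r$, i.e. $\mathcal{S}_t$ is $F$-trivial; note that this half needs no flatness, only coherence and properness.

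For the converse I would use semicontinuity. First, by generic flatness there is a dense open $V \subseteq Y$ over which $\mathcal{T}$ is flat; replacing $Y$ by $V$ changes neither the generic fibre nor the density of $U$. Over the flat locus the function $t \mapsto h^0(X_t, \mathcal{T}_t)$ is upper semicontinuous, hence constant and equal to $h^0(X_\eta, \mathcal{T}_\eta)$ on a dense open $U_1$. As $U$ is dense I may pick $u \in U \cap U_1$; then $\mathcal{T}_u \cong \mathcal{O}_{X_u}^r$, and (the fibres being geometrically connected and reduced, so that $h^0(\mathcal{O}_{X_t}) = 1$) I get $h^0(X_\eta, \mathcal{T}_\eta) = h^0(X_u, \mathcal{T}_u) = r$. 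By Grauert's theorem $\pi_\ast \mathcal{T}$ is locally free of rank $r$ on $U_1$ with formation commuting with base change, so the evaluation map $\pi^\ast \pi_\ast \mathcal{T} \to \mathcal{T}$ restricts on $X_u$ to the evaluation of all global sections of the trivial bundle $\mathcal{T}_u$, which is an isomorphism. Its cokernel then vanishes on $X_u$, and flatness together with local freeness of $\mathcal{T}_u$ makes $\mathcal{T}$ locally free near $X_u$; pushing the relevant closed loci forward along the proper map $X_{U_1} \to U_1$ and using that a closed subset missing $u$ cannot contain the generic point $\eta$, I obtain an open $U_2 \ni \eta$ over which the evaluation map is an isomorphism of locally free rank $r$ sheaves. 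Restricting to $X_\eta$ yields $\mathcal{T}_\eta \cong \mathcal{O}_{X_\eta}^r$, which is the claim.

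I expect the point requiring the most care to be the fibrewise compatibility $j_t^\ast (F^e)^\ast \mathcal{S} = (F_{X_t}^e)^\ast \mathcal{S}_t$ — that the absolute Frobenius of $X$ restricts to the absolute Frobenius of each fibre — since this is exactly what legitimately converts an $F$-triviality question into an ordinary triviality question. A secondary technical nuisance is that $\mathcal{T} = (F^e)^\ast \mathcal{S}$ need not be flat over $Y$ (the relative Frobenius does not in general preserve $Y$-flatness), which is why the converse is phrased only over the generic fibre and is handled by first restricting to the flat locus.
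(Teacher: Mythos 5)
Your proof is correct, and it follows the same broad strategy as the paper's --- spread the generic trivialisation out over an open subset using properness of $\pi$, and use semicontinuity for the converse --- but it is tighter at exactly the places where the paper's own argument is loose, so a comparison is worthwhile. In the forward direction the paper extends only the $n = \rk \mathcal{S}$ trivialising sections to some $X_U$ and then argues via the support of the cokernel of the resulting map $\mathcal{O}^n \to F^{e^\ast}\mathcal{S}$; you extend the trivialising isomorphism together with its inverse, so triviality of $\mathcal{T}$ over an open $X_U$ drops out directly, with no discussion of injectivity or of ranks on possibly non-integral fibres. In the converse the paper applies the semicontinuity theorem directly to $F^{e^\ast}\mathcal{S}$, which silently presupposes that this Frobenius pull-back is still flat over $Y$ --- this does not follow formally from flatness of $\mathcal{S}$, and your reduction to the flat locus via generic flatness is exactly the right (and missing) step. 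Likewise, your use of Grauert's theorem and the evaluation map $\pi^\ast\pi_\ast\mathcal{T} \to \mathcal{T}$ makes precise the paper's unsubstantiated sentence ``Since, by assumption, the image of this set is a proper subset'': it is the base-change statement in Grauert's theorem that guarantees the spread-out sections restrict to a basis of $H^0(X_u, \mathcal{T}_u)$ at the chosen point $u$, which is what forces the bad locus to miss the fibre over $u$ and hence, being closed in the irreducible base, to miss $\eta$.

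One caveat: your parenthetical ``the fibres being geometrically connected and reduced, so that $h^0(\mathcal{O}_{X_t}) = 1$'' is not among the hypotheses of the proposition, and your converse genuinely uses it --- both to get $h^0(X_u, \mathcal{T}_u) = r$ (rather than $r \cdot h^0(\mathcal{O}_{X_u})$) and to make the evaluation map an isomorphism, not merely a surjection, on $X_u$. You should state it as an additional standing assumption rather than a fact. Note, however, that the paper's proof has the identical implicit dependence: its semicontinuity dichotomy tacitly equates triviality of $F^{e^\ast}\mathcal{S}_u$ with $\dim H^0 = n$, which needs $h^0(\mathcal{O}_{X_u}) = 1$ in just the same way. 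Since in all applications in the paper the fibres are geometrically integral projective curves, nothing downstream is affected, and your Frobenius--fibre compatibility $j_t^\ast (F^e)^\ast \mathcal{S} = (F_{X_t}^e)^\ast \mathcal{S}_t$ (valid for $t = \eta$ as well, by naturality of the absolute Frobenius with respect to arbitrary morphisms of $\mathbb{F}_p$-schemes) correctly legitimises the reduction that the paper also uses without comment.
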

\begin{proof}
Denote the rank of $\mathcal{S}$ by $n$ and denote the generic fiber by $\mathcal{S}_\eta$. Assume that $F^{e^\ast} \mathcal{S}_\eta \cong \mathcal{O}_{X_\eta}^n$. This isomorphism is given by $n$ global sections $s_1, \ldots, s_n$ without zeros which are linearly independent (cf. Lemma \ref{LTrivial}).

The issue is local on the base, so that we may assume that $Y = \Spec A$ is the spectrum of a noetherian integral domain.
We have that $H^0(X_y, F^{e^\ast}\mathcal{S}_y) = H^0(X, F^{e^\ast}\mathcal{S} \otimes k(y))$ for all $y \in Y$ (cf. \cite[Corollary 9.4]{hartshornealgebraic}). Moreover, since sheafification, tensor products and cohomology all commute with direct limits we obtain an inclusion $H^0(X, F^{e^\ast}\mathcal{S} \otimes A_f) \to H^0(X, F^{e^\ast}\mathcal{S} \otimes Q(A))$ and we may assume that the sections $s_1, \ldots, s_n$ are already defined over $A_f$. We therefore reduced to the situation that we have an injective morphism $\varphi: \mathcal{O}_X^n \to F^{e^\ast}\mathcal{S}$ which is an isomorphism over the generic fiber. The cokernel of $\varphi$ is supported on a (proper) closed subset of $X$. As $\pi$ is projective the image of the support is closed in $Y$. Moreover, it does not contain the generic point of $Y$. Hence, we obtain a non-empty open subset $U$ of $Y$ where we obtain an isomorphism on the fibers.

For the other direction, assume to the contrary that $\mathcal{S}$ is not trivialized by $F^e$ on the generic fiber, i.\,e.\ assume that $\dim_{K(Y)} H^0(X_\eta, F^{e^\ast}\mathcal{S}_\eta) \neq n$ or that any choice of $n$ global sections admits zeros. In the first case we obtain by semicontinuity that $\dim_{K(Y)} H^0(X_\eta, F^{e^\ast}\mathcal{S}_\eta) \leq n -1$. But the set where the dimension of global sections is $\leq n-1$ is open (and non-empty) by semicontinuity. Hence, the set of points where the dimension is strictly bigger is a proper closed subset. But then it cannot contain a dense subset. We conclude that $\dim_{K(Y)} H^0(X_\eta, F^{e^\ast}\mathcal{S}_\eta) = n$. As in the other direction we therefore obtain an injective morphism $\mathcal{O}_X^n \to F^{e^\ast}\mathcal{S}$. As before the cokernel is supported on a closed subset. Since, by assumption, the image of this set is a proper subset its image cannot contain the generic point of $Y$.
\end{proof}

Based on this result it seems natural to ask whether it is sufficient to have a (not necessarily bounded) family $e_t$ such that $F^{e_t^\ast}\mathcal{S}_t$ is trivial for all $t$ in a dense subset to draw the same conclusion. The answer is no, as the following example shows:

\begin{Bsp}
Let $k$ be an algebraically closed field of characteristic $p > 0$ and let $X$ be an ordinary elliptic curve. Fix a Weierstra\ss\ equation $f(x,y,z)$ for $X$ so that $O = (0,1,0)$ is the only point in $V_+(z)$. Let now $Y = V(f(s,t, 1)) \subseteq \Spec k[s,t]$ and consider the trivial family $X \times_k Y \to Y$. Then $(s,t,1) - O$ is a (relative) Weil divisor of degree zero -- denote the associated line bundle by $\mathcal{L}$.

Since $X$ is ordinary there are points $P_n = (a_n, b_n, 1)$ in $Y$ such that the line bundle $\mathcal{T}_n$ associated to $P_n - O$ is of order $p^n$ in $\Pic^{0}(X)$. Specialising $s \mapsto a_n, t \mapsto b_n$ we obtain a sequence of line bundles trivialised by the $e$th Frobenius, where $e$ cannot be bounded. In particular, it follows from Proposition \ref{Ftrivialfamilies} that the line bundle on the generic fiber of $X \times Y$ cannot be $F$-trivial.
\end{Bsp}

Note that Example \ref{Ftrivialnotopen} shows that it is not enough to require that $F^{e^\ast} \mathcal{S}_u$ be trivial for all $u$ in a non-empty subset minus the generic point. In particular, $F$-triviality is not an open condition. A simple instance of this fact is the following well-known

\begin{Bsp}
Let $Y$ be a geometrically integral smooth projective curve defined over a perfect field $k$ of positive characteristic and assume that Frobenius does not act nilpotently on $H^1(Y, \mathcal{O}_X)$. Fix a cohomology class $c$ in $H^1(Y, \mathcal{O}_Y)_s$. Denote by $Y_t$ the base change of $Y$ to $k(t)$. The cohomology class $t \cdot c$ yields a rank $2$ vector bundle $\mathcal{S}$ over $Y \times \mathbb{A}^1_k$. Over the generic fiber $\mathcal{S}$ is not $F$-trivial. However, over the special fiber $t= 0$ the bundle is trivial.

A somewhat more sophisticated example is obtained by considering $t \cdot c + n$, where $0 \neq n \in H^1(Y, \mathcal{O}_Y)_n$ -- this exists provided that, in addition, $Y$ is not ordinary. Then one obtains a bundle which is generically not $F$-trivial but trivialised by a power of Frobenius ($\geq 1$) over the fiber $t= 0$.
\end{Bsp}

In closing we provide an example of a non-trivial family that exhibits a similar behavior to the examples provided in sections 2 and 3.

\begin{Bsp}
Let $k$ be a field of characteristic $3$ and consider the family $T = V_+(x^5 + y^5 + z^5 + tx^2y^3) \subset \mathbb{P}^2_{k[t]}$ whose members we denote by $X_t$. Fix the syzygy bundle $\mathcal{S} = \Syz(x^2,y^2,z^2)$ on $\mathbb{P}^2_{k[t]}$. We claim that  $\mathcal{S}(3)$ restricted to the generic fiber of $T$ is $F$-trivial while $\mathcal{S}(3)$ restricted to the special fiber $X_0$ is not $F$-trivial but stable.

One verifies with the help of a computer algebra system that $F^{3^\ast}\mathcal{S}_t$ admits two linearly independent syzygies without zeros of total degree $81$. So that $F^{3^\ast}(\mathcal{S}(3)_t)$ is trivial by Lemma \ref{LTrivial}

Moreover, on the special fiber $t =0$ (i.\,e.\ on the Fermat quintic) $F^\ast \mathcal{S}$ admits the syzygy $s = (zy, xz, xy)$ of total degree $8$. Note that $s$ has no zeros. It follows from \cite[Lemma 2.3]{brennerstaeblergeometricdeformations} and Lemma \ref{SufficientStable} that $\mathcal{S}(3)$ is stable but not strongly semistable.
\end{Bsp}

\begin{Bem}
In \cite[Remark 4.8]{brennerstaeblergeometricdeformations} Brenner and the present author have shown that for a field of characteristic $2$ the syzygy bundle $\Syz(x,y,z)$ on $T \subset \mathbb{P}_{k[t]}$ admits a similar behavior.
\end{Bem}

\section*{Acknowledgements}
This paper arose from discussions with Holger Brenner in relation to our joint paper \cite{brennerstaeblergeometricdeformations}. In particular, I thank him for sparking my interest in this problem and for several useful discussions. Furthermore, I thank Manuel Blickle for useful discussions and the referee for a careful reading of an earlier draft and useful comments.

\bibliography{bibliothek.bib}
\bibliographystyle{amsplain}
\end{document}